\newtheorem{theorem}{Theorem}
\newtheorem{lemma}[theorem]{Lemma}
\newtheorem{proposition}[theorem]{Proposition}
\newtheorem{corollary}[theorem]{Corollary}
\theoremstyle{definition}
\newtheorem{definition}[theorem]{Definition}
\newtheorem{example}[theorem]{Example}
\newtheorem{problem}[theorem]{Problem}
\newtheorem{remark}[theorem]{Remark}
\numberwithin{equation}{section}
\newcommand{\seq}[1]{\left \{#1 \right \}_{k=0}^{\infty}}
\newcommand{\R}{\mathbb{R}}
\newcommand{\LP}{\mathscr{L}-\mathscr{P}}
\newcommand{\ds}[1]{\displaystyle{#1}}
\title{Multiplier sequences,  classes of generalized Bessel functions and open problems}
\author{George Csordas and Tam\'as Forg\'acs}
\email{george@math.hawaii.edu}
\email{tforgacs@csufresno.edu}
\begin{document}

\maketitle
\begin{abstract} Motivated by the study of the distribution of zeros of generalized Bessel-type functions, the principal goal of this paper is to identify new research directions in the theory of multiplier sequences. The investigations focus on multiplier sequences interpolated by functions which are not entire and sums, averages and parametrized families of multiplier sequences. The main results include (i) the development of  a `logarithmic' multiplier sequence and  (ii)  several integral representations of a generalized Bessel-type  function utilizing some ideas of G. H. Hardy and  L. V. Ostrovskii. The  explorations and analysis,  augmented throughout  the paper by a plethora  of examples, led to a number of conjectures and intriguing open problems.

\noindent MSC2000: Primary 30D10, 30D15,  33C20; Secondary  26C10, 30C15
\end{abstract}
\section{Introduction}

In 1905 G.~H.~Hardy \cite{hardy_paper} studied the following entire functions of exponential type, as generalizations of $e^z$:
\begin{equation}\label{hardy}
E_{s,a}(z):=\sum_{n=0}^{\infty} (n+a)^s \frac{z^n}{n!}, \qquad s \in \mathbb{R}, \quad a \geq 0.
\end{equation}
Although Hardy allowed the parameters to be complex numbers, in the present paper we will only consider parameters satisfying the restrictions in (\ref{hardy}). Note that $E_{0,a}=e^z$, and for $k \in \mathbb{N}$, $E_{k,a}=e^zT_k(z)$, where $T_k(z)$ is a polynomial of degree $k$. If $a=0$, we set $\displaystyle{E_{s,0}=\sum_{n=1}^{\infty} n^s \frac{x^n}{n!}}$, $s \in \mathbb{R}$. In \cite{ostrovskii}, I.~V.~Ostrovskii describes the real zeros of these generalized exponential functions.
\begin{theorem}\label{ostrovthm} (\cite[Theorem 2.5]{ostrovskii}) Let $E_{s,a}$ be defined as in (\ref{hardy}), and let $k \in \mathbb{N}_0$. 
\begin{itemize}
\item[(a)] For $k<s<k+1$, $E_{s,a}$ has only $k+1$ real zeros.
\item[(b)] For $s<0$, $E_{s,a}$ does not have any real zeros.
\end{itemize}
\end{theorem}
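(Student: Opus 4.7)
\emph{Proof proposal.} The plan is to prove both parts by establishing Laplace-type integral representations of $E_{s,a}$ derived from the Gamma-function identity $b^{\alpha}=\Gamma(-\alpha)^{-1}\int_0^\infty t^{-\alpha-1}e^{-bt}\,dt$ (valid for $\alpha<0$, $b>0$), applied to $b=n+a$. Multiplying by $x^n/n!$, summing over $n$, and using $\sum_{n=0}^\infty(x^n/n!)e^{-nt}=\exp(xe^{-t})$ (sum and integral commute by Fubini when $s<0$) yields
\begin{equation*}
E_{s,a}(x)\;=\;\frac{1}{\Gamma(-s)}\int_0^\infty t^{-s-1}\,e^{-at}\exp\!\bigl(xe^{-t}\bigr)\,dt\qquad (s<0).
\end{equation*}
Since $\Gamma(-s)>0$ for $s<0$ and the rest of the integrand is strictly positive for every $x\in\R$, part (b) follows at once: $E_{s,a}(x)>0$ on $\R$.

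For part (a), fix $k<s<k+1$. I would split $(n+a)^s=(n+a)^{k+1}(n+a)^{s-k-1}$, apply the identity above to the second factor (its exponent now lying in $(-1,0)$), and realise the first via $(-\partial_t)^{k+1}e^{-(n+a)t}=(n+a)^{k+1}e^{-(n+a)t}$, giving
\begin{equation*}
E_{s,a}(x) \;=\; \frac{1}{\Gamma(k+1-s)}\int_0^\infty t^{k-s}\, (-\partial_t)^{k+1}\!\bigl[e^{-at}\exp(xe^{-t})\bigr]\,dt.
\end{equation*}
A short induction in $k$ shows
$(-\partial_t)^{k+1}[e^{-at}\exp(xe^{-t})] = e^{-at}\exp(xe^{-t})\,T_{k+1}(xe^{-t})$,
where $T_{k+1}$ is the polynomial from the relation $E_{k+1,a}(z)=e^z T_{k+1}(z)$ noted in the introduction. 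Since all Taylor coefficients of $E_{s,a}$ are positive, any real zero must lie in $(-\infty,0)$; the substitution $u=xe^{-t}$ for $x<0$ recasts the representation as an integral of $T_{k+1}(u)$ against a strictly positive kernel on $(x,0)$. An auxiliary lemma, proved inductively via the recurrence $T_{m+1}=(x+a)T_m+xT_m'$ (itself a consequence of $E_{s+1,a}=xE_{s,a}'+aE_{s,a}$), shows that $T_{k+1}$ has exactly $k+1$ simple negative real zeros $-\alpha_1<\cdots<-\alpha_{k+1}<0$.

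Tracking the sign of the integral as $|x|$ crosses each $\alpha_j$ produces at least $k+1$ sign changes of $E_{s,a}$ on the negative axis, hence at least $k+1$ real zeros. The main obstacle is the matching \emph{upper} bound. My plan is to combine the above representation with the differential recurrence $E_{s+1,a}=xE_{s,a}'+aE_{s,a}$ and an analogous representation for $E_{s,a}'$ (which replaces $T_{k+1}$ by a degree-$(k+2)$ polynomial interlacing it) to deduce that $E_{s,a}(-y)$ is strictly monotone between consecutive $\alpha_j$'s and hence carries at most one zero per interval. Establishing this Laguerre-type interlacing at non-integer $s$, where the classical multiplier-sequence machinery does not apply directly, is the crux of the argument and the step I would expect to consume most of the technical effort.
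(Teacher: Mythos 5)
First, a point of orientation: the paper does not actually prove this theorem --- it is quoted from Ostrovskii's paper with a citation --- so there is no in-paper argument to compare yours against. The closest internal analogue to your machinery is Theorem \ref{intrep1}, where the Cauchy--Saalsch\"utz formula is used to represent $B(s,x)$ for $k<s<k+1$; your representations of $E_{s,a}$ are the single-factorial analogues of that construction. Your argument for part (b) is correct and essentially complete: for $s<0$ the kernel $t^{-s-1}e^{-at}\exp(xe^{-t})/\Gamma(-s)$ is strictly positive, the interchange of sum and integral is justified by absolute convergence since $(n+a)^s$ is bounded for $s<0$, and positivity of $E_{s,a}$ on $\R$ follows. (The one caveat is the degenerate case $a=0$, where the paper's convention drops the $n=0$ term and $E_{s,0}$ then has a trivial zero at the origin.)

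Part (a), however, is not proved. You acknowledge that the upper bound --- at most $k+1$ real zeros --- is ``the crux'' of the argument and defer it; but that bound \emph{is} the substance of the theorem, and the Laguerre-type interlacing you hope to invoke is precisely what fails to be off-the-shelf at non-integer $s$. Moreover, even your lower bound is not established as written: the fact that $T_{k+1}$ changes sign $k+1$ times on the negative axis does not by itself force the integral $\int_x^0 K(x,u)\,T_{k+1}(u)\,du$ to change sign each time $x$ crosses a zero $-\alpha_j$, because the kernel $K(x,u)$ depends on $x$ and the contribution from $u$ near $0$ (where $T_{k+1}>0$) may dominate. To extract $k+1$ sign changes one needs genuine asymptotic control of $E_{s,a}(-y)$ as $y\to\infty$ (after the substitution $t=\ln y+\tau$ the limiting sign is governed by a constant of the form $\int_0^\infty v^{a-1}e^{-v}T_{k+1}(-v)\,dv$, which must be shown to be nonzero and of the right sign), or a Rolle-type induction on $k$ via the recurrence $E_{s+1,a}=xE_{s,a}'+aE_{s,a}$. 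As it stands, the proposal settles (b) but only sets up, without completing, the zero count in (a).
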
 
A consequence of Theorem \ref{ostrovthm} is that $\seq{(k+a)^s}$ is not a multiplier sequence (cf. Definition \ref{CMS}) for non-integral or negative $s$. The observation that the sequence $\seq{1/k!}$ is a complex zero decreasing sequence (cf. Definition \ref{CZDS}), however, motivates the study of functions of the form 
\begin{equation} \label{genBessel}
B_{s,a}(z):=\sum_{n=0}^{\infty} (n+a)^s \frac{z^n}{n! n!}
\end{equation}
along with the location of their zeros (see Section \ref{generalizedbessel}). We close this section with some definitions, and the general question which led to most of the work and considerations in this paper. \\
\begin{definition}\label{LP}  A real entire function $\displaystyle{\varphi(x)=\sum_{k=0}^{\infty} \frac{\gamma_k}{k!}x^k}$ is said to belong to the {\it Laguerre-P\'olya class}, written $\varphi \in \LP$, if it admits the representation
\[
\varphi(x)=c x^m e^{-ax^2+bx} \prod_{k=1}^{\omega} \left(1+\frac{x}{x_k} \right) e^{-x/x_k},
\]
where $b,c \in \R$, $x_k \in \R \setminus \{ 0\}$, $m$ is a non-negative integer, $a\geq 0$, $0 \leq \omega \leq \infty$ and $\displaystyle{\sum_{k=1}^{\omega} \frac{1}{x_k^2} < +\infty}$.
\end{definition}
\begin{definition}\label{LPI} A real entire function $\displaystyle{\varphi(x)=\sum_{k=0}^{\infty} \frac{\gamma_k}{k!}x^k}$ is said to be of {\it type I} in the Laguerre-P\'olya class, written $\varphi \in \LP I$, if $\varphi(x)$ or $\varphi(-x)$ admits the representation
\[
\varphi(x)=c x^m e^{\sigma x} \prod_{k=1}^{\omega} \left(1+\frac{x}{x_k} \right),
\]
where $c \in \R$, $m$ is a non-negative integer, $\sigma \geq 0$, $x_k >0$, $0 \leq \omega \leq \infty$ and $\displaystyle{\sum_{k=1}^{\omega} \frac{1}{x_k} < +\infty}$. If $\gamma_k \geq 0$ for $k=0,1,2.\ldots$, we write $\varphi \in \LP^+$. Finally, $\LP(-\infty,0]$ denotes the class of functions in $\LP$ whose zeros lie in $(-\infty,0]$.
\end{definition}
We point out that a real entire function $\varphi$ belongs to $\LP I$ if and only if its Taylor coefficients  are of the same sign, or alternate in sign. Thus $\LP^+\subset \LP I \subset \LP$.
\begin{definition}\label{CZDS} A sequence of real numbers $\seq{\gamma_k}$ is called a {\it complex zero decreasing sequence, or CZDS}, if the linear operator $T$ defined by $T[x^k]=\gamma_k x^k$ has the property that for every real polynomial $p(x)$, 
\[
Z_C(T[p(x)]) \leq Z_C(p(x)),
\]
where $Z_C(p)$ denotes the number of non-real zeros of the polynomial $p$, counting multiplicity.
\end{definition}
\begin{definition}\label{CMS} A sequence of real numbers $\seq{\gamma_k}$ is called a (classical\footnote{The original nomenclature for such sequences did not include the adjective {\it classical}. Indeed, P\'olya and Schur in \cite{PS} called these simply {\it Faktorenfolgen erster Art}. More recently, research has focused on sequences giving rise to linear operators that are diagonal with respect to a basis other than the standard one, necessitating the introduction of modifiers. We now talk about Hermite-, Laguerre-, Legendre- and Chebyshev-multiplier sequences (see for example \cite{BC}, \cite{tom_hermite}, \cite{bo}, \cite{tom_laguerre}, \cite{tom_legendre}, \cite{yoshi}). Consequently, we use the word `classical' to describe multiplier sequences whose operators are diagonal with respect to the standard basis.}) {\it multiplier sequence} (of the first kind), if the associated linear operator $T$ defined by $T[x^k]=\gamma_k x^k$, for $k=0,1,2,\ldots$, has the property that for every real polynomial $p(x)$, 
\[
Z_C(T[p(x)])=0 \qquad \text{whenever} \qquad Z_C(p(x))=0.
\]
\end{definition}
In the rest of the paper the term `multiplier sequence' will refer exclusively to a classical multiplier sequence. Also, by `applying a sequence to a function $f$', we simply mean the application of the operator $T=\seq{\gamma_k}$ to $f$; that is, if $f(x)=\sum_{k=0}^{\infty}a_k x^k$, then $T[f(x)]:=\sum_{k=0}^{\infty}\gamma_k a_k x^k$.
 The following is one of the essential results concerning the characterization of multiplier sequences, due to P\'olya and Schur.
\begin{theorem}(\cite{PS} or \cite[Ch.\,II.]{obreschkoff})\label{algchar} Let $\seq{\gamma_k}$ be a sequence of real numbers. The following are equivalent:
\begin{itemize}
\item[(i)] $\seq{\gamma_k}$ is a multiplier sequence;
\item[(ii)] (Algebraic characterization) for each $n \in \mathbb{N}_0$;
\[
\sum_{k=0}^n \binom{n}{k}\gamma_kx^k \in \LP I;
\]
\item[(iii)] (Transcendental characterization) 
\[
\sum_{k=0}^{\infty} \frac{\gamma_k}{k!}x^k \in \LP I.
\]
\end{itemize}
\end{theorem}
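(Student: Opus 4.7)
The plan is to establish the cyclic implications (i) $\Rightarrow$ (ii) $\Rightarrow$ (iii) $\Rightarrow$ (i).

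\emph{For (i) $\Rightarrow$ (ii):} Feed $T$ the real-rooted polynomial $(1+x)^n$. Its image
\[
T[(1+x)^n] = \sum_{k=0}^{n}\binom{n}{k}\gamma_k\, x^k
\]
has only real zeros by the multiplier-sequence property. To upgrade this to membership in $\LP I$, I would separately show that $\seq{\gamma_k}$ itself consists of terms of a single sign or strictly alternating signs. This sign structure follows by probing $T$ on well-chosen real-rooted inputs: for instance, $T[x^k(1-x^2)] = x^k(\gamma_k - \gamma_{k+2}x^2)$ being real-rooted forces $\gamma_k\gamma_{k+2}\geq 0$, while analogous tests with $x^{k-1}(1-x)(1+ax)$ for suitable $a>0$ pin down the relative signs of consecutive entries. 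With the sign pattern in hand, real-rootedness of $T[(1+x)^n]$ coincides with membership in $\LP I$.

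\emph{For (ii) $\Rightarrow$ (iii):} A limit argument. Since $\LP I$ is invariant under the dilation $x \mapsto x/n$, the polynomials
\[
\Phi_n(x) := \sum_{k=0}^{n}\binom{n}{k}\gamma_k(x/n)^k = \sum_{k=0}^{n} \frac{n(n-1)\cdots(n-k+1)}{n^k}\cdot\frac{\gamma_k}{k!}\,x^k
\]
lie in $\LP I$ for every $n$. The growth bounds on $\gamma_k$ forced by the Newton-type inequalities implicit in (ii), combined with the $1/k!$ factor, give locally uniform convergence $\Phi_n \to \Phi(x) := \sum_{k=0}^{\infty}\gamma_k x^k/k!$ on $\C$. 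Hurwitz's theorem and the closure of $\LP I$ under locally uniform limits then place $\Phi$ in $\LP I$.

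\emph{For (iii) $\Rightarrow$ (i)}, the main obstacle: given $\Phi \in \LP I$, I would approximate $\Phi$ by polynomials $\Phi_N \in \LP I$ obtained by truncating its canonical Hadamard product and by replacing any exponential factor $e^{\sigma x}$ by $(1+\sigma x/M)^M$, so that each $\Phi_N$ is, up to a power of $x$, a product of real linear factors $\prod(1+\alpha_j x)$. The coefficients $\gamma_k^{(N)} := k!\,[x^k]\Phi_N$ induce operators $T_N$, each of which is verified to be a genuine (finite-rank) multiplier sequence via induction on the number of linear factors, with the classical Laguerre/Hermite--Poulain composition theorem supplying the single-factor step. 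For a real polynomial $p$ with only real zeros, $Z_C(T_N[p]) = 0$ for every $N$; since the coefficients of $T_N[p]$ converge to those of $T[p]$, Hurwitz's theorem (together with the sign structure of $\seq{\gamma_k}$, which precludes spurious conjugate pairs in the limit) gives $Z_C(T[p]) = 0$, closing the cycle. The principal technical difficulty is precisely this last limit: one must verify that each $\Phi_N$ induces a bona fide multiplier sequence and then control the approximation uniformly enough to apply Hurwitz without real zeros of $T_N[p]$ escaping to infinity or coalescing into complex conjugate pairs.
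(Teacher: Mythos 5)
First, a point of order: the paper states this theorem with a citation to P\'olya--Schur \cite{PS} and to \cite[Ch.\,II]{obreschkoff} and gives no proof of it, so there is no in-paper argument to compare yours against; what follows measures your outline against the classical proof. Your three-step cycle reproduces its architecture faithfully: (i)$\Rightarrow$(ii) by applying $T$ to well-chosen real-rooted polynomials; (ii)$\Rightarrow$(iii) via the dilated Jensen polynomials $g_n(x/n)$, the growth bound on $\gamma_k$ extracted from Newton's inequalities (which does give local uniform convergence of your $\Phi_n$), and Hurwitz; and (iii)$\Rightarrow$(i) by polynomial approximation of $\varphi$ within $\LP I$ plus a composition theorem.

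Two steps, however, are named rather than proved, and they are where the content of the theorem lives. In (i)$\Rightarrow$(ii), the test $T[x^k(1-x^2)]$ gives only $\gamma_k\gamma_{k+2}\ge 0$; to reach the full ``one sign or alternating'' pattern required for membership in $\LP I$ (as opposed to mere real-rootedness of $T[(1+x)^n]$) one must also rule out sign changes hidden behind zero entries and tie the even- and odd-indexed subsequences together, e.g.\ by applying $T$ to products $x^m\prod_j(x^2-a_j^2)$; your ``analogous tests with $x^{k-1}(1-x)(1+ax)$'' does not obviously accomplish this. More seriously, in (iii)$\Rightarrow$(i) the ``induction on the number of linear factors'' is not an induction on the bare statement ``$T_m$ is a multiplier sequence'': writing $q_{m+1}=(1+\alpha x)\,q_m$ and $\gamma_k^{(m)}=k!\,[x^k]q_m$, one finds $T_{m+1}[p]=T_m[p]+\alpha\, x\, T_m[p']$, which mixes the images of $p$ and $p'$ under $T_m$, so the induction hypothesis must be strengthened (to an interlacing statement), or one must prove the Malo--Schur composition theorem (that $\sum k!\,a_k b_k x^k$ is real-rooted when $\sum a_kx^k$ is and $\sum b_kx^k$ has only real zeros of one sign) outright. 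That lemma is essentially equivalent to the implication being proved, so deferring to it leaves the core unestablished. By contrast, the final Hurwitz passage you single out as the principal difficulty is routine: a non-real zero of $T[p]$ would be a limit of zeros of the real-rooted polynomials $T_N[p]$ of bounded degree, which is impossible unless $T[p]\equiv 0$, and real zeros escaping to infinity merely lower the degree and cannot create non-real ones.
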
 
\begin{definition}\label{jensenpoly} Let $T=\seq{\gamma_k}$ be a sequence of real numbers. For $n \in \mathbb{N}_0$, we definite the $n^{th}$ Jensen polynomial associated with the sequence $T$ to be
\[
g_n(x):=T[(1+x)^n]=\sum_{k=0}^n \binom{n}{k}\gamma_kx^k.
\]
\end{definition}
Given Theorem \ref{algchar}, a reasonably easy way to show that a sequence $T=\seq{\gamma_k}$ is {\it not} a multiplier sequence is to demonstrate the existence of a Jensen polynomial associated with $T$ possessing non-real zeros. \\
 \indent The following problem motivated most of the investigations in the present paper.
\begin{problem}\label{besseltypeproblem} Characterize all non-negative sequences $\seq{\gamma_k}$ such that if 
\[
f(x)=\sum_{k=0}^{\infty} \frac{\gamma_k}{k!}x^k
\]
is an entire function, then 
\begin{equation}\label{BesselGenGeorge}
F_p(x)=\sum_{k=0}^{\infty} \frac{\gamma_k}{k!\Gamma(k+p+1)}x^k \in \LP, \qquad \text{for} \quad -p \notin \mathbb{N}.
\end{equation}
\end{problem}
We regard the function $F_p(x)$ in (\ref{BesselGenGeorge}) as a generalized Bessel-type function. In support of this view, we recall that the {\it modified Bessel function of the first kind of order $p$} (\cite[p.\,228]{andrews} or \cite[p.\,116]{rs}) is defined as
\begin{equation}\label{I_p}
I_p(x):=\left( \frac{x}{2}\right)^p \sum_{k=0}^{\infty} \frac{(x/2)^{2k}}{k! \Gamma(k+p+1)}=\frac{(x/2)^p}{\Gamma(1+p)} {}_0F_1\left(-;1+p;\frac{x^2}{4}\right), \quad (-p \notin \mathbb{N}),
\end{equation}
where $\displaystyle{{}_0F_1\left(-;b;x\right):=\sum_{k=0}^{\infty} \frac{x^k}{(b)_k k!}}$ is the hypergeometric function, and $\displaystyle{(b)_p:=b(b+1) \cdots (b+n-1)=\frac{\Gamma(b+n)}{\Gamma(b)}}$ is the rising factorial, $-b \notin \mathbb{N}$, $n \in \mathbb{N}$ and $(b)_0=1$. Simple transformations show that with $\gamma_k=1/2^{2k}$ for $k=0,1,2,\ldots$, the function $F_0(x)$ in (\ref{BesselGenGeorge}) reduces to the modified Bessel function $I_0(\sqrt{x})$. We emphasize here that our generalizations of the Bessel functions are different from those appearing in the literature. Indeed, see for example \'A. Baricz' excellent monograph \cite{baricz}, where he studies, for suitable parameters $b$ and $c$, the function 
\[
w_p(z)=\sum_{n=0}^{\infty} \frac{(-c)^n}{n!\Gamma(p+n+(b+1)/2)}\left( \frac{z}{2}\right)^{2n+p},
\]
and refers to it as the generalized Bessel function of the first kind of order $p$. \\
\indent In reference to Problem \ref{besseltypeproblem}, it is clear that $F_p(x) \in \LP$ whenever $\seq{\gamma_k}$ is a multiplier sequence. Thus, the task is to characterize non-negative real sequences $\seq{\alpha_k}$, which are not multiplier sequences, but for which the `composed' sequence $\seq{\alpha_k/k!}$ is a multiplier sequence. Canonical examples appear to be difficult to construct. As an illustrative example, the sequence $\seq{k^2+2}$ is not a multiplier sequence, since $\displaystyle{\sum_{k=0}^{\infty} \frac{k^2+2}{k!}x^k=e^x(2+x+x^2) \notin \LP}$ (cf. (iii), Theorem \ref{algchar}). On the other hand, one can readily check that 
\[
F(x)=\sum_{k=0}^{\infty}\frac{k^2+2}{k!k!}x^k=(2+x)I_0(\sqrt{x})={}_0F_1\left(-;1;x\right) \in \LP,
\]
and whence $\seq{\frac{k^2+2}{k!}}$ is a multiplier sequence  (see Proposition \ref{quad_by_fact}).\\
\indent The rest of the paper is organized as follows. In connection with Problem 8, Section 2 investigates a logarithmically interpolated sequence (Theorem 11 and Corollary 13),  and  sums and averages of multiplier sequences (Theorem 14 and Corollary 16). By adopting some of the ideas of Hardy \cite{hardy_paper} and Ostrovskii \cite{ostrovskii}, the main results of Section 3 furnish several integral representations of the entire function
$f(x)=\sum_{k=0}^{\infty} \frac{\sqrt{k}}{k!k!}x^k$ (cf. Theorem 22). Motivated by the work in Section 3 (see, in particular, Example 20), Section 4 provides  generating functions which yield  new families of multiplier sequences varying smoothly with a parameter. The goal of Section 5 is multifold: (i) to indicate possible applications of the foregoing results in the theory Bessel functions or hypergeometric functions, (ii) to highlight additional propositions  (see, for example,  Proposition \ref{quad_by_fact} ) supporting the conjecture in Section 3.1 and (iii) to cite additional examples and list problems which arose during the analysis  of various sequences, but remain unsolved at this time.

\section{The log sequence}\label{log}
The function $B_{s,a}(z)$ in equation (\ref{genBessel}) can be regarded as a `generalized' exponential function \'a la Hardy, whose Taylor coefficients are interpolated by the function $\displaystyle{g(x)=\frac{(x+a)^s}{\Gamma(x+1)}}$, $s \in \mathbb{R}, a \geq 0$. The restriction on $a$ assures that for $s \in \mathbb{N}$, $g(x)$ interpolates a multiplier sequence. Notice that for non-integral $s >0$, $g(x)$ is not entire. Thus, our  explorations differ from the traditional approach, where the interpolating function is almost exclusively taken to be entire. We first look at a logarithmically interpolated sequence and the following real entire function:
\[
f(x)=\sum_{k=0}^{\infty} \frac{\ln(k+2)}{k!k!}x^k.
\]
When understood as an alteration of the modified Bessel function of the first kind of order zero (see (\ref{I_p}))
\[
I_0(x)=\sum_{k=0}^{\infty} \frac{(x/2)^{2k}}{k!k!},
\]
 one would attempt to establish the reality of zeros of $f$ by showing that $\seq{\ln(k+2)}$ is a multiplier sequence. This is not the case, however. If $T:=\{\ln (k+2)\}_{k=0}^{\infty}$, then the zeros of the Jensen polynomial
$$
g_3(x)=T[(1+x)^3]=\ln 2+3x\ln 3 +3x^2\ln 4 +x^3\ln 5
$$
are $x_3= - 0.330544\dots$ and $x_{1,2}=-1.1267576\dots\pm i\, 0.182619129\dots$.
We are thus led to consider the new sequence $T:=\{\ln\, (k+2)/k!\}_{k=0}^{\infty}$ and the associated entire function:
$$
f(x):=T[e^x]=T\left[\sum_{k=0}^{\infty} \frac{x^k}{k!}\right]=\sum_{k=0}^{\infty} \left(\frac{\ln (k+2)}{k!} \right)\frac{x^k}{k!}.
$$

While we believe that the sequence $\seq{\ln(k+2)/k!}$ is a multiplier sequence, we were able to establish such a claim only for an approximating sequence. In order to be able to formulate our theorem (cf. Theorem \ref{approxthm}), we need a few preliminary results. Recall (see \cite[p. 8]{rainville}) that 
\[
\lim_{n \to \infty}(H_n-\ln(n))=\gamma,
\]
where $\displaystyle{H_n:=\sum_{k=1}^n \frac{1}{k}}$ is the $n^{th}$ harmonic number, and $\gamma$ is the Euler-Mascheroni constant. Thus, for $n \gg 1$, 
\[
H_{n+2}-\gamma \approx \ln(n+2),
\]
and consequently,
\[
\frac{H_{n+2}-\gamma}{n!} \approx \frac{\ln(n+2)}{n!}.
\]
\begin{proposition} If $n \in \mathbb{N}$, then
\[
 H_n=\sum_{k=1}^n \binom{n}{k} (-1)^{k-1} \frac{1}{k}.
\]
\end{proposition}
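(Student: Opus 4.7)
My plan is to prove the identity by a short integral computation using the binomial theorem, which avoids any induction and makes the appearance of $H_n$ transparent.

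The starting point is the polynomial identity
\[
1-(1-x)^n = -\sum_{k=1}^{n}\binom{n}{k}(-x)^k = \sum_{k=1}^{n}\binom{n}{k}(-1)^{k-1}x^{k}.
\]
Dividing both sides by $x$ (the left-hand side has a zero at $x=0$, so this is a polynomial identity) yields
\[
\frac{1-(1-x)^n}{x} = \sum_{k=1}^{n}\binom{n}{k}(-1)^{k-1}x^{k-1}.
\]
Integrating from $0$ to $1$ term by term gives the right-hand side of the claimed identity:
\[
\int_0^1 \frac{1-(1-x)^n}{x}\,dx = \sum_{k=1}^{n}\binom{n}{k}(-1)^{k-1}\frac{1}{k}.
\]

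Next I would evaluate the same integral by the substitution $u=1-x$, which converts it to
\[
\int_0^1 \frac{1-u^n}{1-u}\,du = \int_0^1 \bigl(1+u+u^2+\cdots +u^{n-1}\bigr)\,du = \sum_{j=1}^{n}\frac{1}{j}= H_n.
\]
Equating the two evaluations gives the identity. There is essentially no obstacle: the argument is a two-line calculation once one recognizes the key trick of integrating the finite geometric sum. (An alternative route, slightly longer, is induction on $n$ using Pascal's rule together with the identity $\binom{n}{j}/(j+1)=\binom{n+1}{j+1}/(n+1)$, which reduces the inductive step to the fact that $\sum_{m=1}^{n+1}\binom{n+1}{m}(-1)^{m-1}=1$; but the integral proof is cleaner and I would present that one.)
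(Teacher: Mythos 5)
Your proof is correct. Every step checks out: the binomial expansion of $1-(1-x)^n$, the division by $x$ (legitimate since the numerator vanishes at $x=0$, so the quotient is a polynomial), the term-by-term integration over $[0,1]$, and the substitution $u=1-x$ reducing the integral to $\int_0^1(1+u+\cdots+u^{n-1})\,du=H_n$. However, your route is genuinely different from the paper's: the paper proves the proposition by induction on $n$, treating the even and odd cases separately and relying on the auxiliary identity $\sum_{k=0}^n\binom{n}{k}(-1)^{k-1}\frac{1}{k+1}=-\frac{1}{n+1}$ together with Pascal's rule to carry out the inductive step. Your integral argument buys brevity and transparency -- the harmonic number emerges naturally as the integral of a finite geometric sum, with no case analysis and no auxiliary lemma -- at the modest cost of invoking elementary calculus in what is otherwise a purely combinatorial statement. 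The paper's induction is more elementary in that sense (it stays within finite algebra), but it is longer and the even/odd bookkeeping obscures why the identity holds. Either proof is acceptable; yours is arguably the cleaner one to present.
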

\begin{proof} The proof is based on an induction argument, treating the even and odd cases separately.
\end{proof}

The following result is well known, but for the sake of completeness, we include a short proof of it here.
\begin{proposition} (\cite[vol.\,I, p.\,15]{emot}\label{digamma}). Let $\psi(x)$ denote the digamma function
\[
\psi(x):=\frac{d}{dx} \ln \Gamma(x)=\frac{\Gamma'(x)}{\Gamma(x)}.
\]
Then
\[
\sum_{n=0}^{\infty}\frac{H_{n+2}}{n!}\frac{x^n}{n!}=\sum_{n=0}^{\infty} \frac{\gamma+\psi(3+n)}{n!}\frac{x^n}{n!}.
\]
\end{proposition}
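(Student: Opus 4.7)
The plan is to reduce the claimed identity of generating functions to the termwise identity
\[
H_{n+2}=\gamma+\psi(n+3), \qquad n=0,1,2,\ldots,
\]
after which the two power series agree coefficient by coefficient (with common factor $x^n/(n!)^2$), and the proposition follows immediately with no convergence subtlety.

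To establish this termwise identity I would first record the classical recurrence for the digamma function, obtained by taking the logarithmic derivative of $\Gamma(x+1)=x\Gamma(x)$:
\[
\psi(x+1)=\psi(x)+\frac{1}{x}, \qquad x>0.
\]
Next I would invoke the base evaluation $\psi(1)=-\gamma$, which can be read off, for instance, by logarithmic differentiation of the Weierstrass product
\[
\frac{1}{\Gamma(x)}=xe^{\gamma x}\prod_{k=1}^{\infty}\left(1+\frac{x}{k}\right)e^{-x/k}
\]
and specialization at $x=1$. A routine induction on $m\geq 1$, using the recurrence to pass from $m$ to $m+1$, then yields
\[
\psi(m+1)=-\gamma+\sum_{k=1}^{m}\frac{1}{k}=-\gamma+H_m.
\]
Applying this with $m=n+2$ gives exactly $H_{n+2}=\gamma+\psi(n+3)$, which is the termwise identity promised above.

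There is essentially no obstacle here: the proposition is a repackaging of the well-known formula for the digamma function at positive integer arguments, and the only choice is which standard derivation of $\psi(1)=-\gamma$ to cite (Weierstrass product, Gauss's integral representation, or a direct limit of $H_n-\ln n$). Because the equality is established termwise, no manipulation of the series as analytic functions is required.
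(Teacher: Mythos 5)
Your proposal is correct and follows essentially the same route as the paper: both derive the recurrence $\psi(x+1)=\psi(x)+1/x$ from $\Gamma(x+1)=x\Gamma(x)$, use $\psi(1)=-\gamma$ as the base case, and induct to obtain $\psi(m+1)=H_m-\gamma$, which gives the identity termwise. No issues.
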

\begin{proof} It is well known (see \cite[pp.\,11-12]{rainville}) that 
\begin{eqnarray*}
\Gamma'(1)&=&-\gamma, \qquad \text{and}\\
\Gamma(x+1)&=&x \Gamma(x), \quad \text{for all} \quad x >0.
\end{eqnarray*}
It follows that
\[
\psi(x+1)=\frac{d}{dx} \ln \Gamma(x+1)=\frac{1}{x}+\frac{d}{dx} \ln \Gamma(x)=\frac{1}{x}+\psi(x).
\]
Starting with $\psi(2)=1+\psi(1)=H_1-\gamma$, a simple inductive argument establishes that $\displaystyle{\psi(n+1)=H_n-\gamma}$ for all $n \in \mathbb{N}$, which completes the proof.
\end{proof}
\begin{theorem}\label{approxthm} The entire function 
\[
f(x):=\sum_{n=0}^{\infty} \frac{(H_{n+2}-\gamma)}{n!}\frac{x^n}{n!}
\]
belongs to $\LP^+$, and hence the sequence $\left\{\frac{(H_{n+2}-\gamma)}{n!}\right\}_{n=0}^{\infty}$ is a multiplier sequence.
\end{theorem}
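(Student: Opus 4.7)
The plan is to exhibit an entire function $\Phi$ that interpolates the sequence $\{(H_{n+2}-\gamma)/n!\}$ at the non-negative integers and lies in $\LP(-\infty,0]$; Laguerre's classical theorem (that $\varphi\in\LP(-\infty,0]$ implies $\{\varphi(n)\}_{n=0}^{\infty}$ is a multiplier sequence) will then settle the claim. By Theorem~\ref{algchar}(iii) this is equivalent to the transcendental generating function $f$ lying in $\LP I$; since every Taylor coefficient $(H_{n+2}-\gamma)/(n!)^2$ of $f$ is strictly positive, we further conclude $f\in\LP^+$.

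Guided by Proposition~\ref{digamma}, the natural candidate is
\[
\Phi(z) := \frac{\psi(z+3)}{\Gamma(z+1)}, \qquad \Phi(n)=\frac{H_{n+2}-\gamma}{n!} \quad (n\in\mathbb{N}_0).
\]
The first task is to check that $\Phi$ extends to an entire function: $1/\Gamma(z+1)$ has simple zeros at $z=-1,-2,-3,\ldots$, while $\psi(z+3)$ has simple poles precisely at $z=-3,-4,\ldots$. Hence $\Phi$ has simple zeros at $z=-1,-2$, and at every $z=-n$ with $n\geq 3$ the simple pole of $\psi(z+3)$ cancels the simple zero of $1/\Gamma(z+1)$ to leave a finite nonzero value, computable from $\operatorname{Res}_{w=-k}\Gamma(w) = (-1)^k/k!$.

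The heart of the argument is to verify $\Phi\in\LP(-\infty,0]$, which decomposes into three claims: (a) $\Phi$ has no non-real zeros; (b) $\Phi$ has no positive real zeros; (c) $\Phi$ is of order at most $1$ with its zeros $\{-x_k\}$ satisfying $\sum 1/x_k^2 < \infty$. For (a), the classical partial-fraction expansion
\[
\psi(w) = -\gamma + \sum_{n=0}^{\infty}\left(\frac{1}{n+1}-\frac{1}{n+w}\right)
\]
yields $\Im\psi(w) = \Im w \cdot \sum_{n=0}^{\infty}|n+w|^{-2}$, which is strictly positive whenever $\Im w>0$; thus $\psi(z+3)$ is nonvanishing off $\R$, and since $1/\Gamma(z+1)$ has only real zeros, so does $\Phi$. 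For (b), the unique positive real zero of $\psi$ lies well below $3$, so $\psi(z+3)>0$ on $[0,\infty)$, while $1/\Gamma(z+1)>0$ there as well. For (c), Stirling gives $\psi(w)=O(\log|w|)$ away from its poles and $1/\Gamma$ is of order $1$; the real zeros of $\Phi$ consist of $-1$, $-2$, the shifted positive real zero of $\psi$ lying in $(-2,-1)$, and one additional zero in each interval $(-k-1,-k)$ for $k\geq 3$ (forced by the sign alternations of $\Phi$ at consecutive negative integers computed in Step~1), so $|x_k|\sim k$ as $k\to\infty$.

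The most delicate part I expect is item (c): though the qualitative picture is clear, one must combine the sign alternations of $\Phi$ at $z=-n$ $(n\geq 3)$ with the known location of the positive real zero of $\psi$ to pin down the zeros of $\Phi$ precisely enough to verify $\sum 1/x_k^2<\infty$ and to exclude any further real zeros that might be hidden by the pole-zero cancellations. Once (a)--(c) are established, Laguerre's theorem applied to $\Phi$ yields that $\{\Phi(n)\}_{n=0}^{\infty}=\{(H_{n+2}-\gamma)/n!\}_{n=0}^{\infty}$ is a multiplier sequence, and hence $f\in\LP^+$.
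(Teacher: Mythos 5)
Your overall strategy --- interpolate the sequence by $\Phi(z)=\psi(z+3)/\Gamma(z+1)$ and invoke Laguerre's theorem --- is sound and is close in spirit to the paper's proof; the only structural difference is that you apply Laguerre once to the full sequence, whereas the paper applies it to $\{\psi(k+3)/\Gamma(k+3)\}_{k=0}^{\infty}$ and then multiplies by the polynomial multiplier sequence $\{(k+1)(k+2)\}_{k=0}^{\infty}$. The two are equivalent, since $\Phi(z)=(z+1)(z+2)\,\psi(z+3)/\Gamma(z+3)$. Your items (a) and (b) are fine: the partial-fraction computation of $\Im\psi$ correctly shows that $\psi$, hence $\Phi$, has only real zeros, and since the unique positive zero of $\psi$ is $x_0=1.4616\ldots<3$, all zeros of $\Phi$ are negative. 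The genuine gap is item (c), which you yourself flag as not carried out. Knowing that the zeros are real, negative, and satisfy $\sum 1/x_k^2<\infty$ does not by itself place $\Phi$ in $\LP$: one must also exclude a non-admissible exponential factor in the Hadamard factorization (cf.\ Definition \ref{LP}), which in practice means proving that $\Phi$ has order at most $1$. Your sketch ``$\psi(w)=O(\log|w|)$ away from its poles'' does not yet give a global growth bound for the entire function $\Phi$, since the estimate degenerates near the negative real axis where the pole--zero cancellations occur; and the subsequent attempt to pin down exactly one zero per interval $(-k-1,-k)$ is both delicate and unnecessary (extra negative zeros would do no harm once order at most $1$ is known).

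The paper closes precisely this gap with a one-line device you should adopt: $\psi(x)/\Gamma(x)=-\left(1/\Gamma(x)\right)'$, and since $1/\Gamma\in\LP(-\infty,0]\subset\LP$ and $\LP$ is closed under differentiation, $\psi/\Gamma\in\LP$ with no growth estimate required. Because $\LP(-\infty,0]$ is \emph{not} closed under differentiation (see the Remark preceding the paper's proof), one must still locate the zeros of $\psi/\Gamma$ --- they are exactly the zeros of $\psi$, all lying in $(-\infty,2)$ --- so that $\varphi(x+3):=\psi(x+3)/\Gamma(x+3)\in\LP(-\infty,0)$, and hence $\Phi(z)=(z+1)(z+2)\varphi(z+3)\in\LP(-\infty,0)$ as well. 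Substituting this for your item (c) makes your argument complete and essentially identical to the paper's.
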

\begin{remark} Before proving Theorem \ref{approxthm}, we observe that the class of functions $\LP(-\infty,0]$ (see Definition \ref{LPI}) is not closed under differentiation. For example, $\varphi(x):=e^{-x^2+x}(x+1) \in \LP(-\infty,0]$, but $\varphi'(x)=e^{-x^2+x}(2-x-2x^2)$ has a positive zero.  
\end{remark}
\begin{proof} We note that $H_n-\gamma >0$ for all $n \in \mathbb{N}$ (see \cite[p.\,9]{rainville}), and hence the Taylor coefficients of $f$ are all positive. With the aid of Proposition \ref{digamma}, we can express $f(x)$ as
\begin{equation}\label{f1}
 f(x)=\sum_{n=0}^{\infty} \frac{\psi(n+3)}{n!} \frac{x^n}{n!}.
\end{equation} 
Since $\displaystyle{\frac{1}{\Gamma(x)} \in \LP(-\infty,0]}$ (cf. Definition \ref{LPI}), it follows that
\[
\left(\frac{1}{\Gamma(x)} \right)'=-\frac{\Gamma'(x)}{\Gamma^2(x)}=-\frac{\psi(x)}{\Gamma(x)} \in \LP.
\]  
It is known that for $x>0$, the only extremum of $\Gamma(x)$ occurs at $x_0=1.4616\ldots$ (see, for example, \cite[p.\,90]{andrews}). Since $x_0$ corresponds to a minimum of $\Gamma(x)$, we infer that $\Gamma'(x)$ and $\psi(x)$ are both negative on the interval $(0, x_0)$ and are both positive on $(x_0,\infty)$. It now follows that all the zeros of the entire function $\varphi(x):=\frac{\psi(x)}{\Gamma(x)}$ lie in $(-\infty, 2)$. Hence, $\varphi(x+3) \in \LP(-\infty,0)$ and consequently, by Laguerre's theorem (\cite[Theorem 4.1(3)]{ccczds}), the sequence $T:=\seq{\varphi(k+3)}$ is a CZDS (cf. Definition \ref{CZDS}) and {\it a fortiori} T is a multiplier sequence.  Thus
\[
T(e^x)=\sum_{k=0}^{\infty} \frac{\psi(k+3)}{\Gamma(k+3)}\frac{x^k}{k!}=\sum_{k=0}^{\infty} \frac{\psi(k+3)}{(k+2)!}\frac{x^k}{k!} \in \LP^+.
\]
Finally, applying the multiplier sequence $\seq{(k+2)(k+1)}$ to $T(e^x)$ yields the desired result (cf. (\ref{f1})):
\[
\sum_{k=0}^{\infty} \frac{(k+2)(k+1)\psi(k+3)}{(k+2)!}\frac{x^k}{k!}=\sum_{k=0}^{\infty} \frac{\psi(k+3)}{k!}\frac{x^k}{k!}=f(x) \in \LP^+.
\]
\end{proof}
\begin{corollary} For $t \in \mathbb{R}$, let $\{t \}=t-\lfloor t \rfloor$, where $\lfloor t \rfloor$ denotes the greatest integer less than or equal to $t$. Then the sequence
\[
\seq{\frac{\displaystyle{\ln (k+2)+ \int_{k+2}^{\infty} \frac{\{t\}}{t^2}dt}}{k!}}
\]
is a multiplier sequence.
\end{corollary}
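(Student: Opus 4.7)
The plan is to recognize that the numerator appearing in the corollary is nothing but a closed-form expression for $H_{k+2} - \gamma$, at which point the statement reduces immediately to Theorem \ref{approxthm}. Concretely, I would establish the identity
\[
H_n - \gamma = \ln n + \int_n^{\infty} \frac{\{t\}}{t^2}\,dt \qquad (n \in \mathbb{N}),
\]
and then specialize to $n = k+2$ and divide through by $k!$ to recover the sequence in the statement as $\{(H_{k+2}-\gamma)/k!\}_{k=0}^\infty$, known to be a multiplier sequence by Theorem \ref{approxthm}.

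To derive the identity, I would start from the trivial decomposition $t = \lfloor t\rfloor + \{t\}$, which yields $\frac{1}{t} = \frac{\lfloor t\rfloor}{t^2} + \frac{\{t\}}{t^2}$, and integrate from $1$ to $n$:
\[
\ln n = \int_1^n \frac{\lfloor t\rfloor}{t^2}\,dt + \int_1^n \frac{\{t\}}{t^2}\,dt.
\]
The first integral telescopes after splitting into unit subintervals:
\[
\int_1^n \frac{\lfloor t\rfloor}{t^2}\,dt = \sum_{k=1}^{n-1} k\left(\frac{1}{k} - \frac{1}{k+1}\right) = \sum_{k=1}^{n-1} \frac{1}{k+1} = H_n - 1.
\]
Since $0 \leq \{t\}/t^2 \leq 1/t^2$, the integral $\int_1^\infty \{t\}/t^2\,dt$ converges absolutely, so I may write $\int_1^n = \int_1^\infty - \int_n^\infty$ to obtain
\[
H_n = \ln n + 1 - \int_1^\infty \frac{\{t\}}{t^2}\,dt + \int_n^\infty \frac{\{t\}}{t^2}\,dt.
\]
Letting $n \to \infty$ and using $H_n - \ln n \to \gamma$ identifies the constant $1 - \int_1^\infty \{t\}/t^2\,dt$ as $\gamma$, which yields the desired identity.

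There is no substantive obstacle here: the argument is a standard integral representation for the harmonic numbers, and the whole content of the corollary lies in recognizing that the exotic-looking summand $\ln(k+2) + \int_{k+2}^\infty \{t\}/t^2\,dt$ is simply a repackaging of $H_{k+2} - \gamma$. The only minor care needed is in justifying the decomposition $\int_1^n = \int_1^\infty - \int_n^\infty$, which is immediate from the bound on $\{t\}/t^2$ noted above.
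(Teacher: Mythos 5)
Your proof is correct and follows essentially the same route as the paper: both arguments reduce the corollary to Theorem \ref{approxthm} via the identity $H_n=\ln n+\gamma+\int_n^{\infty}\{t\}t^{-2}\,dt$. The only difference is that the paper cites this identity from Lagarias, while you derive it from scratch (correctly) via the decomposition $t=\lfloor t\rfloor+\{t\}$ and the telescoping evaluation of $\int_1^n\lfloor t\rfloor t^{-2}\,dt$, which makes your version self-contained.
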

\begin{proof} In \cite[p.\,540]{lagarias} J.~Lagarias states that for all $k \geq 1$, 
\begin{equation} \label{lagar}
H_k=\ln k + \gamma +\int_k^{\infty} \frac{\{t \}}{t^2}dt.
\end{equation}
Rearranging equation (\ref{lagar}) and applying Theorem \ref{approxthm} yields the result.
\end{proof}
\subsection{Sums and averages} The harmonic approximation to the logarithm motivates the study of sums and averages of initial segments of multiplier sequences (and sequences in general), and whether or not such derived sequences are again multiplier sequences. We begin by noting that if $\seq{\gamma_k}$ is a multiplier sequence, then the sequences
\begin{equation}\label{twoTs}
T_1 = \seq{\sum_{j=0}^k \gamma_j } \qquad \text{and} \qquad T_2 = \seq{\frac{1}{k+1}\sum_{j=0}^k \gamma_k }
\end{equation}
need not be multiplier sequences. Indeed, if $\seq{\gamma_k}=\seq{1/k!}$, then 
\begin{eqnarray*}
T_1[(1+x)^4]&=&1+8x+15 x^2+\frac{32}{3}x^3+\frac{64}{24}x^4 \notin \LP,\\
T_2[(1+x)^3]&=&1+3x+\frac{5}{2}x^2+\frac{2}{3}x^3 \notin \LP.
\end{eqnarray*}
The converse implication however is true, if the sequence $\seq{\gamma_k}$ can be interpolated by a polynomial with non-negative coefficients.
\begin{theorem}\label{coolstuff} For $k \in \mathbb{N}_0$ let $\gamma_k=p(k)$, where $\ds{p(x):=\sum_{j=0}^m a_jx^j}$, and $a_j \geq 0$. Set
\begin{eqnarray*}
S(k)&=&\sum_{j=0}^k \gamma_j, \qquad \text{and}\\
A(k)&=&\frac{\gamma_0+\gamma_1+\cdots+\gamma_k}{k+1}, \qquad k \geq 0.\\
\end{eqnarray*}
If the average sequence $\seq{A(k)}$ is a multiplier sequence, then so is the sequence $\seq{\gamma_k}$. 
\end{theorem}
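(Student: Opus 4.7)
The plan is to exploit the transcendental characterization of multiplier sequences (Theorem \ref{algchar}(iii)) by reducing the problem to a statement about real zeros of two associated polynomials. First, from the recurrence $(k+1)A(k)=S(k)=S(k-1)+\gamma_k=kA(k-1)+\gamma_k$ (with the value of $A(-1)$ irrelevant, being multiplied by $0$), one reads off
\[
\gamma_k=(k+1)A(k)-kA(k-1),\qquad k\geq 0.
\]
Multiplying by $x^k/k!$ and summing (a routine reindexing) yields the identity
\[
\tilde G(x):=\sum_{k=0}^\infty\frac{\gamma_k}{k!}x^k \;=\;(1-x)\Phi(x)+x\Phi'(x),\qquad\text{where}\quad\Phi(x):=\sum_{k=0}^\infty\frac{A(k)}{k!}x^k.
\]

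Because $\gamma_k=p(k)$ is polynomial in $k$, so is $A(k)=q(k)$, and hence both entire functions factor as
\[
\tilde G(x)=P(x)e^x,\qquad \Phi(x)=Q(x)e^x,
\]
for real polynomials $P$ and $Q$ of the same degrees as $p$ and $q$ respectively (this is just the well-known identity $\sum_{k=0}^\infty k^i x^k/k!=B_i(x)e^x$ with $B_i$ the $i^{th}$ Bell polynomial). Plugging these factorizations into the displayed identity and dividing through by $e^x$ collapses to
\[
P(x)=Q(x)+xQ'(x)=\bigl(xQ(x)\bigr)'.
\]

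Now I would bring the hypothesis to bear. By Theorem \ref{algchar}(iii), the assumption that $\seq{A(k)}$ is a multiplier sequence gives $\Phi\in\LP I$; since $a_j\ge 0$ forces $A(k)\ge 0$, the Taylor coefficients of $\Phi$ are non-negative, so in fact $\Phi\in\LP^+$ and every zero of $\Phi$ lies in $(-\infty,0]$. The exponential $e^x$ has no zeros, so $Q$ itself has only real non-positive zeros, and the same is then true of $xQ(x)$. By Rolle's theorem (a real polynomial with only real zeros has a derivative with only real zeros), together with Gauss--Lucas to place those zeros in the convex hull of the original ones, we conclude that $P=(xQ)'$ has only real non-positive zeros. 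Combined with the non-negativity of the Taylor coefficients of $\tilde G$ (which follows from $\gamma_k=p(k)\ge 0$), this yields $\tilde G=Pe^x\in\LP^+\subset \LP I$, and a final application of Theorem \ref{algchar}(iii) shows that $\seq{\gamma_k}$ is a multiplier sequence.

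The main obstacle, insofar as there is one, is the bookkeeping that produces the EGF identity and the clean algebraic relation $P=(xQ)'$; once these are in place, the $\LP$-class conclusions are standard consequences of Rolle's theorem and the closure of $\LP^+$ under multiplication by $e^x$.
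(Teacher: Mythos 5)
Your proof is correct and is essentially the paper's argument: the paper also forms $Q(x)=e^{-x}\sum_n A(n)x^n/n!$ and shows $f(x)=e^{x}D[xQ(x)]$, which is exactly your identity $P=(xQ)'$ after dividing out $e^x$. The only (cosmetic) difference is that you reduce to the polynomial factors and invoke Rolle and Gauss--Lucas, whereas the paper applies the closure of $\LP$ under differentiation and under multiplication by $e^{\pm x}$ directly to the entire functions.
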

\begin{proof} We shall arrive at the desired result by demonstrating that the function
\[
f(x)=\sum_{k=0}^{\infty} p(k) \frac{x^k}{k!}
\]
belongs to $\LP^+$. To this end consider
\begin{eqnarray*}
Q(x)&=&e^{-x} \sum_{n=0}^{\infty} A(n) \frac{x^n}{n!}\\
&=&e^{-x} \sum_{n=0}^{\infty} \left(\sum_{k=0}^n p(k) \right) \frac{1}{n+1} \frac{x^n}{n!} \in \LP, 
\end{eqnarray*}
where the membership in $\LP$ follows, since by assumption, $\seq{A(k)}$ is a multiplier sequence. Consequently, $xQ(x)$ and its derivative both belong to $\LP$. We now calculate
\begin{eqnarray*}
D\left[xQ(x) \right]&=&D\left[e^{-x} \sum_{n=0}^{\infty} \left(\sum_{k=0}^n p(k) \right)\frac{1}{n+1}\frac{x^{n+1}}{n!} \right] \hspace{2 in} \left(D:=\frac{d}{dx} \right)\\
&=&e^{-x} \left[\sum_{n=0}^{\infty}\left(\sum_{k=0}^n p(k) \right)\frac{x^n}{n!}-\sum_{n=0}^{\infty}\left(\sum_{k=0}^n p(k) \right)\frac{x^{n+1}}{(n+1)!} \right]\\
&=&e^{-x} \left[p(0)+\sum_{n=0}^{\infty}\left(\sum_{k=0}^{n+1} p(k) \right)\frac{x^{n+1}}{(n+1)!}-\sum_{n=0}^{\infty}\left(\sum_{k=0}^n p(k) \right)\frac{x^{n+1}}{(n+1)!} \right]\\
&=&e^{-x}\left[p(0) +\sum_{n=0}^{\infty} p(n+1) \frac{x^{n+1}}{(n+1)!}\right]\\
&=&e^{-x}\left[\sum_{n=0}^{\infty} p(n) \frac{x^{n}}{n!}\right]\\
&=&e^{-x} f(x).
\end{eqnarray*}
Thus $e^x D[xQ(x)]=f(x) \in \LP$. The assumption that $a_j \geq 0$ for all $j\in \mathbb{N}_0$ ensures that in fact $f \in \LP^+$, and our proof is complete.
\end{proof}
We offer two corollaries of Theorem \ref{coolstuff}.
\begin{corollary} If $\seq{A(k)}$ is a multiplier sequence, then so is $\seq{S(k)}$. 
\end{corollary}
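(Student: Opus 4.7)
The plan is to observe the algebraic identity $S(k) = (k+1)A(k)$ coming straight from the definitions, and then express $\{S(k)\}$ as the termwise product of $\{k+1\}$ with the given multiplier sequence $\{A(k)\}$. Since products of multiplier sequences are multiplier sequences (this follows from the transcendental characterization together with the fact that the composition of the two diagonal operators is again diagonal, or can be seen algebraically via Theorem \ref{algchar}), it suffices to verify that $\{k+1\}_{k=0}^{\infty}$ is itself a multiplier sequence.

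For the latter, I would appeal directly to part (iii) of Theorem \ref{algchar}: the interpolating entire function is
\[
\sum_{k=0}^{\infty}(k+1)\frac{x^k}{k!} = \sum_{k=0}^{\infty}\frac{x^k}{k!} + x\sum_{k=1}^{\infty}\frac{x^{k-1}}{(k-1)!} = (1+x)e^x,
\]
which is clearly in $\LP I$ (indeed in $\LP^+$), so $\{k+1\}$ qualifies.

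Combining these two observations gives $\{S(k)\} = \{(k+1)A(k)\}$ as a product of two multiplier sequences and hence a multiplier sequence.

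There is really no hard step: the only thing to be careful about is to invoke the product-closure of multiplier sequences in the correct form. If one prefers to avoid that closure result, an alternative (slightly longer) route would be to start from $Q(x) = e^{-x}\sum A(n)x^n/n! \in \LP$ as in the proof of Theorem \ref{coolstuff}, note that $xQ(x) \in \LP$, and then observe that
\[
e^{-x}\sum_{n=0}^{\infty}S(n)\frac{x^n}{n!}
\]
can be recovered as a simple linear operation on $xQ(x)$; but this is unnecessary once the product-of-multiplier-sequences principle is used.
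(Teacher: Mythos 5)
Your proof is correct and follows essentially the same route as the paper, which simply notes that $S(k)=(k+1)A(k)$ and that $\seq{(k+1)}$ is a multiplier sequence, leaving the product-closure of multiplier sequences implicit. Your write-up just makes explicit the verification that $(1+x)e^x\in\LP^+$ and the use of the composition of the two diagonal operators.
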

\begin{proof} The result is immediate, since $\seq{(k+1)}$ is a multiplier sequence. 
\end{proof}
\begin{corollary} \label{shiftright} Suppose that $p$ is as in the statement of Theorem \ref{coolstuff}, and let $m=\deg p$. If $1 \leq \ell \leq m+1$ and the sequence $\displaystyle{\seq{\frac{S(k)}{(k+1)_{\ell}}}}$ is a multiplier sequence, then so is the sequence
\[
\{0,0,0,\ldots,\underbrace{p(0)}_{\ell\text{th slot}},p(1),p(2), \ldots \}.
\]
\end{corollary}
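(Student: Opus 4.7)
The plan is to construct an $\LP$-preserving operator that sends the transcendental generating function of the hypothesized multiplier sequence directly to the generating function of the target sequence, thereby generalizing the $e^x D(x e^{-x}\,\cdot\,)$ trick that underlies the proof of Theorem \ref{coolstuff}. Set
\[
\varphi(x) := \sum_{k=0}^\infty \frac{S(k)}{(k+1)_\ell}\frac{x^k}{k!};
\]
by the transcendental characterization in Theorem \ref{algchar}(iii), together with the non-negativity of the $S(k)$, the hypothesis yields $\varphi \in \LP^+$.

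The key step is to verify that the operator $\Lambda_\ell[f](x) := e^x \frac{d}{dx}\bigl[x^\ell e^{-x} f(x)\bigr]$, applied to $\varphi$, produces the generating function of the sequence $\{0,0,\ldots,0,p(0),p(1),\ldots\}$. Expanding the derivative gives the closed form $\Lambda_\ell[\varphi] = x^{\ell-1}\bigl[\ell\varphi + x\varphi' - x\varphi\bigr]$, after which the rising-factorial identities $\frac{k+\ell}{(k+1)_\ell} = \frac{1}{(k+1)_{\ell-1}} = \frac{k}{(k)_\ell}$ and the telescoping $S(k)-S(k-1) = p(k)$ collapse the bracketed expression coefficient-by-coefficient to $\sum_{k\geq 0}\frac{p(k)}{(k+1)_{\ell-1}}\frac{x^k}{k!}$. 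Absorbing the $x^{\ell-1}$ and re-indexing with $m = k + \ell - 1$ then delivers
\[
\Lambda_\ell[\varphi](x) = \sum_{m=\ell-1}^\infty \frac{p(m-\ell+1)}{m!}\,x^m,
\]
which is precisely the exponential generating function of $\{0,\ldots,0,p(0),p(1),\ldots\}$ with $p(0)$ in the $\ell$-th slot.

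To finish, observe that each constituent of $\Lambda_\ell$ — multiplication by $e^{-x}$, multiplication by the monomial $x^\ell$, differentiation, and multiplication by $e^x$ — preserves $\LP$; hence $\Lambda_\ell[\varphi]\in\LP$, and the non-negativity of its Taylor coefficients upgrades this to $\Lambda_\ell[\varphi]\in\LP^+$. A second appeal to Theorem \ref{algchar}(iii) then certifies the target sequence as a multiplier sequence. The only real obstacle is guessing the correct operator: placing $x^\ell$ inside the derivative (rather than iterating the $\ell=1$ operator) is exactly what forces the $(k+1)_\ell$ in the hypothesis to shed a single factor under the derivative and to produce the $(k+1)_{\ell-1}$ demanded by the conclusion.
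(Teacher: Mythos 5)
Your proof is correct and follows essentially the same route as the paper: the operator $\Lambda_\ell[f]=e^{x}D\bigl[x^{\ell}e^{-x}f\bigr]$ is exactly the paper's $D\bigl[e^{-x}x^{\ell}\varphi\bigr]$ followed by multiplication by $e^{x}$, and both arguments hinge on the identity $(k+1)_{\ell}\,k!=(k+\ell)!$ together with the telescoping $S(k)-S(k-1)=p(k)$. The only difference is presentational — the paper collapses $x^{\ell}\varphi$ to $\sum_k S(k)x^{k+\ell}/(k+\ell)!$ before differentiating, while you expand the Leibniz rule and cancel rising factorials coefficient-by-coefficient — so the two are the same proof.
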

\begin{proof} The proof is essentially the same as that of Theorem \ref{coolstuff}, if one differentiates $x^{\ell}\widetilde{Q}(x)$, where $\widetilde{Q}(x)$ is an appropriately modified version of $Q(x)$. 
In particular, 
\begin{eqnarray*}
&&D\left[e^{-x}\left(x^{\ell} \sum_{k=0}^{\infty} \frac{S(k)}{(k+1)_{\ell}}\frac{x^k}{k!} \right) \right]\\
&=&D\left[e^{-x}\left( \sum_{k=0}^{\infty} S(k)\frac{x^{k+\ell}}{(k+\ell)!} \right) \right]\\
&=&e^{-x}\left[\sum_{k=\ell-1}^{\infty}p(k-\ell+1)\frac{x^k}{k!} \right] \in \LP,
\end{eqnarray*}
and hence the sequence $\{0,0,0,\ldots,p(0),p(1),p(2),\ldots\}$ is a multiplier sequence. 
\end{proof}
We remark that Corollary \ref{shiftright} gives a sufficient condition when one can pre-concatenate a polynomially interpolated multiplier sequence with a string of zeros and thus obtain another multiplier sequence. 

\begin{example} This example is an illustration of Corollary \ref{shiftright} under the assumption that
\[
p(x)=\prod_{j=1}^m(x+j) \in \LP^+, \qquad m \geq 1.
\]
For such polynomials we claim, that if $S(n):=\sum_{k=0}^m p(k)$, then 
\begin{equation}\label{special}
S(n)=\frac{1}{m+1}\prod_{k=1}^{m+1} (k+n) \qquad \text{for all} \ n \in \mathbb{N}.
\end{equation}
\begin{proof}[Proof of Claim] We proceed by double induction. Fix the degree $m$ of $p$, and let $n=1$.  
\[
S(1)=\sum_{k=0}^1 p(k)=m! +\prod_{k=1}^m(k+1)=m!(m+2)=\frac{1}{m+1} \prod_{k=1}^{m+1}(k+1).
\]
Suppose now that equation (\ref{special}) holds for some $n \geq 1$. Then 
\begin{eqnarray*}
S(n+1)&=&p(n+1)+S(n)\\
&=&\prod_{k=1}^m (n+1+k)+\frac{1}{m+1}\prod_{k=1}^{m+1} (k+n)\\
&=&\frac{1}{m+n+2}\prod_{k=1}^{m+1} (k+n+1)+\frac{n+1}{m+1}\frac{1}{m+n+2}\prod_{k=1}^{m+1} (k+n+1)\\
&=&\frac{1}{m+1} \prod_{k=1}^{m+1} (k+n+1).
\end{eqnarray*}
Since $m$ was arbitrary, the claim follows.
\end{proof}
From equation (\ref{special}) one can readily deduce that the sequence $\displaystyle{\seq{\frac{S(k)}{(k+1)_{\ell}}}}$ is a multiplier sequence for any $1 \leq \ell \leq m+1$. Corollary \ref{shiftright} implies that the sequences
\begin{eqnarray*}
&&\{p_{m}(0), p_{m}(1),p_m(2), \ldots \}\\
&&\{0,p_{m}(0),p_{m}(1),p_{m}(2),\ldots \}\\
&&\{0,0,p_{m}(0), p_{m}(1),p_{m}(2),\ldots\}\\
&& \vdots\\
&& \{0,0,\ldots,\underbrace{p_m(0)}_{m+1\text{st slot}},p_m(1),\ldots\}
\end{eqnarray*}
are all multiplier sequences. By way of illustration, if $m=4$ and $\ell=2$, one obtains the multiplier sequence
\[
\{0,0,4!,5!,\frac{6!}{2!},\frac{7!}{3!},\ldots,\frac{(k+2)!}{(k-2)!},\ldots \},
\]
and thus, the entire function
\[
\sum_{k=2}^{\infty} \frac{(k+2)!}{(k-2)!}\frac{x^k}{k!}=e^xx^2(x+2)(x+6) \in \LP^+.
\]
\end{example}
\begin{example} \label{conversefalse} The converse of Theorem \ref{coolstuff} is false in general. That is, if $\seq{p(k)}$ is a multiplier sequence, the average $\seq{A(k)}$ need not be a multiplier sequence. The sequence $\seq{p(k)}=\seq{1+k+k^2}$ is a multiplier sequence, since
\[
\sum_{k=0}^{\infty} \frac{1+k+k^2}{k!}x^k=e^x(1+x)^2 \in \LP^+.
\]
 The average sequence $\displaystyle{\seq{A(k)}=\seq{\frac{1}{3}(3+2 k+k^2)}}$ however is not a multiplier sequence, because
\[
\sum_{n=0}^{\infty} \frac{3+2 n+n^2}{3} \frac{x^n}{n!}=\frac{1}{3}e^x(3+3x+x^2) \notin \LP.
\]
As can be verified by the reader, remarkably, both the sequence $\seq{(1+k+k^2)^3}$ and its average 
\[
\displaystyle{\seq{\frac{1}{105}(105+244k+386 k^2+384 k^3+246 k^4+90 k^5+15 k^6)}}
\]
are multiplier sequences. \\
\indent One may wonder whether requiring $p$ to belong to $\LP^+$ could result in a partial converse of Theorem \ref{coolstuff}. This is not the case. Setting $p(x)=(x+4)^2$, we see that $S(k)=\frac{1}{6}(1+k)(96+25k+2 k^2)$, and 
\[
\sum_{k=0}^{\infty} \frac{S(k)}{k!}x^k=\frac{e^x}{6}(96 + 150 x + 33 x^2 + 2 x^3) \notin \LP.
\]
\end{example}

\section{$\seq{\frac{(k+a)^s}{k!}}$ type sequences}\label{generalizedbessel}
In general, the sequence $\seq{\displaystyle{\frac{1}{(k+a)k!}}}$ is not a multiplier sequence. For example, if $a=1/2$, then the Jensen polynomial (cf. Definition \ref{jensenpoly})
\[
g_4(x)=\sum_{k=0}^4 \binom{4}{k} \frac{x^k}{(k+1/2)k!}=2+\frac{8}{3}x+\frac{6}{5}x^2+\frac{4}{21}x^3+\frac{1}{108}x^4
\]
has two non-real zeros. 
\begin{lemma}(\cite[Proposition 40]{chasse}) \label{ostrov1} The sequence $\seq{\displaystyle{\frac{1}{(k+a)k!}}}$ is a multiplier sequence for every $a \in \mathbb{N}$.
\end{lemma}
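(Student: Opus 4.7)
The plan is to exhibit an entire function $\varphi$ that interpolates the sequence $\left\{1/((k+a)k!)\right\}_{k=0}^{\infty}$ and lies in $\LP(-\infty,0)$, and then to invoke Laguerre's theorem exactly as done in the proof of Theorem~\ref{approxthm}, yielding that the sequence is a CZDS, hence \emph{a fortiori} a multiplier sequence.

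The natural candidate is
\[
\varphi(x):=\frac{1}{(x+a)\,\Gamma(x+1)}.
\]
First I would verify that $\varphi$ is actually entire: the factor $1/\Gamma(x+1)$ is entire with simple zeros precisely at the negative integers $-1,-2,-3,\ldots$, while $1/(x+a)$ has a simple pole at $x=-a$. Since the hypothesis $a\in\mathbb{N}$ places this pole exactly at one of those zeros, the singularity is removable and $\varphi$ is entire, with $\varphi(k)=1/((k+a)k!)$ for all $k\in\mathbb{N}_0$.

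Next I would place $\varphi$ in $\LP(-\infty,0)$ by producing its Weierstrass factorization. Starting from the standard product
\[
\frac{1}{\Gamma(x+1)} = e^{\gamma x}\prod_{n=1}^{\infty}\left(1+\frac{x}{n}\right)e^{-x/n},
\]
and writing the $n=a$ factor as $(1+x/a)=(x+a)/a$, the $(x+a)$ in the denominator of $\varphi$ cancels, leaving
\[
\varphi(x)=\frac{1}{a}\,e^{(\gamma-1/a)\,x}\prod_{\substack{n=1\\ n\neq a}}^{\infty}\left(1+\frac{x}{n}\right)e^{-x/n}.
\]
This is manifestly of the form required by Definition~\ref{LP} (with $m=0$, quadratic-exponential coefficient $0$, and the exponents $1/n$ satisfying $\sum 1/n^{2}<\infty$), and all of its zeros are at the negative integers $\{-n:n\in\mathbb{N},\,n\neq a\}\subset(-\infty,0)$, so $\varphi\in\LP(-\infty,0)$.

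With $\varphi\in\LP(-\infty,0)$ in hand, Laguerre's theorem (\cite[Theorem 4.1(3)]{ccczds}) — the same result that drives Theorem~\ref{approxthm} — guarantees that $\{\varphi(k)\}_{k=0}^{\infty}=\{1/((k+a)k!)\}_{k=0}^{\infty}$ is a CZDS, and in particular a multiplier sequence. The only place that could be considered an obstacle is the removal of the $n=a$ factor from the Weierstrass product, but once the arithmetic above is carried out explicitly there is nothing subtle left to check.
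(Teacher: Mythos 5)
Your proof is correct, but it takes a different (though closely related) route from the paper's. The paper disposes of the lemma with the single algebraic identity
\[
\frac{(k+1)(k+2)\cdots(k+a-1)}{\Gamma(k+a+1)}=\frac{1}{k!\,(k+a)},
\]
which exhibits the sequence as the term-wise (Hadamard) product of the multiplier sequence $\{(k+1)(k+2)\cdots(k+a-1)\}_{k=0}^{\infty}$ (interpolated by a polynomial with only negative zeros) and the multiplier sequence $\{1/(k+a)!\}_{k=0}^{\infty}$; no Weierstrass products and no appeal to Laguerre's theorem are needed. You instead verify directly that the interpolating function $\varphi(x)=1/\bigl((x+a)\Gamma(x+1)\bigr)$ is entire and lies in $\LP(-\infty,0)$, and then invoke Laguerre's theorem exactly as in the proof of Theorem \ref{approxthm}. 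It is worth noting that your $\varphi$ and the paper's quotient are literally the same entire function, since $(x+1)\cdots(x+a-1)/\Gamma(x+a+1)=1/\bigl((x+a)\Gamma(x+1)\bigr)$; the two arguments differ only in how membership in the relevant class is certified. The paper's version is shorter and more elementary, resting only on closure of multiplier sequences under Hadamard products; yours costs the explicit Weierstrass factorization of $1/\Gamma(x+1)$ but returns the stronger conclusion that the sequence is a CZDS rather than merely a multiplier sequence, and your application of Laguerre's theorem to an order-one, genus-one function such as $1/\Gamma$ is on exactly the same footing as the paper's own use of it in Theorem \ref{approxthm}. Your cancellation of the $n=a$ factor and the resulting product representation check out, including the degenerate case $a=1$.
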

\begin{proof} The result follows immediately from the fact that for any $a \in \mathbb{N}$, 
\begin{eqnarray*}
\frac{(k+1)(k+2)\cdots(k+a-1)}{\Gamma(k+a+1)}&=&\frac{(k+1)(k+2)\cdots(k+a-1)}{1\cdot2\cdots k (k+1)(k+2) \cdots (k+a)}\\
&=&\frac{1}{k!(k+a)}.
\end{eqnarray*}
\end{proof}
\subsection{The sequence $\seq{\sqrt{k}/k!}$}\label{rootkoverkfact} We conjecture that the function $\displaystyle{\varphi(x)=\sum_{k=0}^{\infty} \frac{\sqrt{k}}{k!}\frac{x^k}{k!}}$ belongs to $\LP^+$. Our contention is that among the sequences of the form $\seq{k^s/k!}$ with $s$ non-integral, the case $s=1/2$ is special. In meager support of this claim, we consider the following examples.
\begin{example} The sequence $\seq{k^{1/20}/k!}$ is not a multiplier sequence. In fact, if we consider the Jensen polynomials associated with
\[
f(x)=\sum_{k=0}^{\infty} \frac{k^{1/20}}{k!}x^k
\]
we find that the sixth Jensen polynomial
\[
g_6(x)=6x+\frac{15 x^2}{2^{19/20}}+\frac{10 x^3}{3^{19/20}}+\frac{5x^4}{40 \cdot 2^{9/10}}+\frac{x^5}{4 5^{19/20}}+\frac{x^6}{120 \cdot 6^{19/20}}
\]
has has only four real zeros, along with a pair of non-real zeros. This phenomenon persists for small $s$, but appears to change when $s=1/2$. In this case all the Jensen polynomials that we tested have only real zeros.
\end{example}

There is a marked sparsity of known multiplier sequences which involve $\sqrt{k}$ non-trivially. We offer here the following examples. 
\begin{itemize}
\item[(i)] The sequence $\seq{\cosh(\sqrt{k})}$ is a multiplier sequence. This follows from the containment
\[
\cosh{\sqrt{x}}=\sum_{k=0}^{\infty} \frac{x^k}{(2k)!}=\sum_{k=0}^{\infty} \frac{k!}{(2k)!}\frac{x^k}{k!}=\prod_{k=0}^{\infty} \left(1+\frac{x}{\left(\pi k+\frac{\pi}{2} \right)^2} \right) \in \LP^+,
\]
together with Laguerre's theorem (\cite[Theorem 4.1(3)]{ccczds}). 
\item[(ii)] In order to formulate the second example, we first recall that a sequence $\seq{\gamma_k}$ of non-negative real numbers is said to be \emph{rapidly decreasing}, if $\gamma_k^2 \geq 4 \gamma_{k-1} \gamma_{k+1}$ for all $k \in \mathbb{N}$ (\cite[p.438]{ccczds}). Such sequences are known to be multiplier sequences. We now note that if $\seq{\gamma_k}$ is rapidly decreasing, then so is $\seq{\sqrt{k} \gamma_k}$, which in turn makes the latter also a multiplier sequence.
\end{itemize}
It is not known whether $\displaystyle{\varphi(x)=\sum_{k=0}^{\infty} \frac{\sqrt{k}}{k!}\frac{x^k}{k!}}$ belongs to $\LP^+$. The entire function $\varphi(x)$ is however Hurwitz stable; that is, all of its zeros lie in the closed left half-plane. 
\begin{lemma} The entire function $\displaystyle{\varphi(x)=\sum_{k=0}^{\infty} \frac{\sqrt{k}}{k!}\frac{x^k}{k!}}$ is Hurwitz stable.
\end{lemma}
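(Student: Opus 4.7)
Since $\varphi(0) = 0$, the origin is a zero of $\varphi$ lying on the boundary of the closed left half-plane. Writing $\varphi(x) = x\psi(x)$ where
\[
\psi(x) = \sum_{m=0}^\infty \frac{x^m}{(m+1)^{3/2}(m!)^2}, \qquad \psi(0) = 1,
\]
it suffices to show that $\psi(x) \neq 0$ whenever $\Re(x) > 0$.

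My plan is to derive an integral representation of $\psi$ patterned on the Hardy-Ostrovskii style representations developed in Section 3. Using the elementary Laplace identity $(m+1)^{-3/2} = \frac{2}{\sqrt{\pi}}\int_0^\infty \sqrt{t}\,e^{-(m+1)t}\,dt$ together with Fubini's theorem (justified by absolute convergence of the underlying double series), one arrives at
\[
\psi(x) = \frac{2}{\sqrt{\pi}}\int_0^\infty \sqrt{t}\,e^{-t}\,I_0\bigl(2\sqrt{xe^{-t}}\bigr)\,dt,
\]
in which $I_0(2\sqrt{y}) = \sum_{k\geq 0} y^k/(k!)^2$ is a Bessel-Clifford type function of the kind considered in Section 3. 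It belongs to $\LP(-\infty,0]$, with zero set $\{-j_{0,n}^2/4 : n \geq 1\}$ and product representation $\prod_n(1 + 4y/j_{0,n}^2)$, so for each fixed $t \geq 0$ the function $x \mapsto I_0(2\sqrt{xe^{-t}})$ has all its zeros on the negative real axis.

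To extract Hurwitz stability of $\psi$ from this representation I would proceed by approximation. Consider the Taylor truncations $\psi_N(x) = \sum_{m=0}^{N-1}\frac{x^m}{(m+1)^{3/2}(m!)^2}$, which converge to $\psi$ uniformly on compact subsets of $\mathbb{C}$. If every $\psi_N$ is Hurwitz stable, then Hurwitz's theorem on convergent sequences of analytic functions forces $\psi$ to be either identically zero on $\{\Re(x) > 0\}$ or nowhere-vanishing there; since $\psi(0) = 1$, only the latter alternative can hold. The Routh-Hurwitz criterion verifies stability of $\psi_N$ in low degrees, and the task is to establish it for all $N$.

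The principal obstacle is precisely this last verification. One cannot appeal directly to a general theorem that positive-weighted integrals of functions in $\LP(-\infty,0]$ are Hurwitz stable, since such a statement is false: weighted sums of the form $(1 + ae^{-t_1}x)^n + c(1 + ae^{-t_2}x)^n$ provide explicit counterexamples whose zeros lie in the open right half-plane. The proof must therefore exploit the specific structure of the kernel $I_0(2\sqrt{\cdot\,})$ together with the smooth, integrable weight $\sqrt{t}\,e^{-t}$, most plausibly through the log-concavity of the coefficient sequence $\{(m+1)^{-3/2}(m!)^{-2}\}$ and a careful induction on $N$ that tracks the Hurwitz determinants.
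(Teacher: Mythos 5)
Your argument sets up a plausible framework but does not close: the entire proof reduces to the claim that every truncation $\psi_N$ is Hurwitz stable (or, equivalently, that stability can be extracted from the integral representation $\psi(x)=\tfrac{2}{\sqrt{\pi}}\int_0^\infty \sqrt{t}\,e^{-t}I_0(2\sqrt{xe^{-t}})\,dt$), and you explicitly leave that claim unproven. Worse, the route through truncations is itself suspect as stated: Taylor sections of a Hurwitz stable entire function need not be Hurwitz stable (the sections of $e^{-x}$ already have zeros in the open right half-plane), so the statement ``every $\psi_N$ is stable'' is strictly stronger than the lemma and may well require the lemma's full strength --- or be false --- rather than serve as a stepping stone to it. Your own observation that positive-weighted superpositions of members of $\LP(-\infty,0]$ need not be stable is correct and is precisely why the integral representation, by itself, proves nothing here. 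So as written this is a research plan with the decisive step missing, not a proof.

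The idea you are missing is a reduction to a function whose stability is already known. Ostrovskii proves that $f(x)=\sum_{k\ge 0}\sqrt{k}\,x^k/k!$ is Hurwitz stable. By the Hermite--Biehler theorem, $f(-ix)=p(x)+iq(x)$ with $p,q$ real polynomials-like entire functions having only real, interlacing zeros; applying the multiplier sequence $\{1/k!\}$ termwise to $p$ and $q$ preserves this interlacing (this is the composition theorem in Levin), so $T[f(-ix)]=T[p]+iT[q]$ again has all its zeros in the closed upper half-plane, and rotating back by $x\mapsto ix$ gives exactly $\varphi$. In other words, the extra factor of $1/k!$ is absorbed by a multiplier-sequence argument rather than by a new integral representation; the hard analytic input (stability of the single-factorial series) is outsourced to Ostrovskii. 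If you want to salvage your approach, you would need a genuine stability-preservation theorem for the specific kernel and weight you wrote down, which is essentially an open problem in the spirit of Section 3 of the paper rather than a routine verification.
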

\begin{proof} The work of Ostrovskii (\cite[Corollary 2.2]{ostrovskii}) shows that the entire function
\[
f(x)=\sum_{k=0}^{\infty} \frac{\sqrt{k}}{k!}x^k
\]
is Hurwitz stable. Thus, $f(-ix)$ has all of its zeros in the closed upper half-plane, and hence by the Hermite-Biehler theorem, $f(-ix)=p(x)+iq(x)$, where $p(x)$ and $q(x)$ have only real, interlacing zeros. Let $T=\seq{1/k!}$. Then the entire function $T[f(-ix)]=T[p(x)]+iT[q(x)]$ also has all its zeros in the upper half-plane \cite[p.\,342]{levin}. Finally, the change of variables $x \mapsto ix$ shows that $\varphi(x)$ is Hurwitz stable.
\end{proof}
We close this section by giving two integral representations for the function $\displaystyle{\varphi(x)=\sum_{k=0}^{\infty} \frac{\sqrt{k}}{k!}\frac{x^k}{k!}}$, which may help in determining whether it belongs to $\LP^+$. We arrive at the first of the two representations by adapting the main ideas in \cite[Section 3]{ostrovskii}. 
\begin{theorem}\label{intrep1} If $\displaystyle{f(x,t):=\sum_{n=0}^{\infty} \frac{x^n (e^{-t})^n}{n!n!}}$, then 
\begin{equation}\label{squareroot}
\sum_{n=0}^{\infty} \frac{ \sqrt{n}}{n!}\frac{x^n}{n!}=-\frac{1}{ 2 \sqrt{\pi}} \int_0^{\infty} \left[f(x,u)-f(x,0) \right]\frac{du}{u^{3/2}}.
\end{equation}
\end{theorem}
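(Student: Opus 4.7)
The plan is to derive a Frullani-style integral representation of $\sqrt{n}$, substitute it into the defining series of the left-hand side, and then interchange sum and integral.

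First I would establish the identity
\[
\sqrt{n} \;=\; \frac{1}{2\sqrt{\pi}} \int_0^{\infty} \frac{1 - e^{-nu}}{u^{3/2}}\,du \qquad (n \in \mathbb{N}_0).
\]
For $n=0$ both sides are $0$. For $n\ge 1$, the substitution $v = nu$ gives
\[
\int_0^{\infty} \frac{1-e^{-nu}}{u^{3/2}}\,du \;=\; \sqrt{n} \int_0^{\infty} \frac{1-e^{-v}}{v^{3/2}}\,dv,
\]
and an integration by parts (with antiderivative $-2v^{-1/2}$ of $v^{-3/2}$) kills the boundary terms since $1-e^{-v} \sim v$ as $v \to 0^+$, reducing the remaining integral to $2\int_0^\infty v^{-1/2}e^{-v}\,dv = 2\Gamma(1/2) = 2\sqrt{\pi}$.

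Next, I would multiply the identity by $x^n/(n!n!)$ and sum over $n\ge 0$ to obtain, at least formally,
\[
\sum_{n=0}^{\infty} \frac{\sqrt{n}}{n!n!}\,x^n \;=\; \frac{1}{2\sqrt{\pi}} \int_0^{\infty} \sum_{n=0}^{\infty} \frac{x^n(1-e^{-nu})}{n!\,n!}\,\frac{du}{u^{3/2}} \;=\; -\frac{1}{2\sqrt{\pi}}\int_0^\infty \bigl[f(x,u)-f(x,0)\bigr]\frac{du}{u^{3/2}},
\]
which is precisely the claimed formula.

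The only technical step is justifying the interchange of summation and integration; this is routine Fubini--Tonelli, because the identity established in the first step gives
\[
\sum_{n=0}^{\infty} \frac{|x|^n}{n!\,n!}\int_0^{\infty} \frac{|1-e^{-nu}|}{u^{3/2}}\,du \;=\; 2\sqrt{\pi}\,\sum_{n=0}^{\infty} \frac{\sqrt{n}\,|x|^n}{n!\,n!} \;<\; \infty
\]
for every $x \in \mathbb{C}$, the last series converging with (super)exponential rapidity. Thus no obstacle of substance arises; the content of the theorem is really the knowledge of the particular Frullani representation of $n^{1/2}$, which is the same device Ostrovskii exploits in \cite[Section 3]{ostrovskii} for the generalized exponential functions $E_{s,a}$.
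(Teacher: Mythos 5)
Your proof is correct and follows essentially the same route as the paper: both rest on the integral representation $\sqrt{n}=\frac{1}{2\sqrt{\pi}}\int_0^\infty (1-e^{-nu})u^{-3/2}\,du$ (the $k=0$, $s=1/2$ case of the Cauchy--Saalsch\"utz formula, which the paper cites and you derive directly by parts) followed by term-by-term integration. The only differences are cosmetic: the paper works out the general representation for all $k<s<k+1$ using the generating relation $f(x,t)=\sum (-1)^nQ_n(x)t^n/n!$ and then specializes, while you go straight to $s=1/2$ and supply the Fubini--Tonelli justification explicitly, which the paper leaves tacit.
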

\begin{proof}
We start with the following generalizations of the modified Bessel function:
\begin{eqnarray} \label{genbess}
B(0,x)&:=&\sum_{n=0}^{\infty} \frac{x^n}{n!n!},  \nonumber \\
B(s,x)&:=&\sum_{n=0}^{\infty} \frac{n^s}{n!}\frac{x^n}{n!} \qquad (s >0).
\end{eqnarray}
Differentiating (\ref{genbess}) with respect to $x$ yields
\[
\frac{d}{dx}B(s,x)=\sum_{n=1}^{\infty} \frac{ n^{s+1}}{n!}\frac{x^{n-1}}{n!},
\]
and whence
\[
x\frac{d}{dx}[B(s,x)]=B(s+1,x), \qquad s \geq 0.
\]
 Now set $\displaystyle{f(x,t):=\sum_{n=0}^{\infty} \frac{x^n (e^{-t})^n}{n!n!}}$, and consider the generating relation
\begin{equation} \label{bessgenerating}
f(x,t)=\sum_{n=0}^{\infty}(-1)^n Q_n(x) \frac{t^n}{n!}.
\end{equation}
Differentiating (\ref{bessgenerating}) with respect to $t$ and $x$ yields
\begin{eqnarray}
\sum_{n=0}^{\infty} \frac{-n x^n (e^{-t})^n}{n! n!}&=&\sum_{n=0}^{\infty}(-1)^{n+1} Q_{n+1}(x) \frac{t^n}{n!} \label{tdiv}, \quad \text{and}\\
\sum_{n=0}^{\infty} \frac{ n x^{n-1}(e^{-t})^n}{n!n!}&=&\sum_{n=0}^{\infty} (-1)^n Q'_n(x) \frac{t^n}{n!}. \label{xdiv}
\end{eqnarray}
By equating the coefficients of $t^n$ in (\ref{tdiv}) and (\ref{xdiv}), we deduce that 
\[
Q_{n+1}(x)=xQ'_n(x), \qquad n \in \mathbb{N}_0.
\]
Notice that $Q_0(x)=B(0,x)$, and since the sequences $\{ B(n,x)\}$ and $\{Q_n(x) \}$ satisfy the same recurrence relation, we conclude that 
\[
Q_n(x)=B(n,x), \qquad n \in \mathbb{N}_0.
\]
With the aid of (\ref{bessgenerating}) we now give an integral representation for $B(s,x)$ for non-integral values of $s$. For $k<s<k+1$, $k \in \mathbb{N}_0$, the Cauchy-Saalsch\"utz formula (\cite[Sec. 12.21]{whittaker}) yields
\[
\Gamma(-s)=\int_0^{\infty} \left[e^{-t}-\sum_{j=0}^k (-1)^j \frac{t^j}{j!} \right]\frac{dt}{t^{s+1}}.
\] 
The change of variables $t=nu$ gives
\[
n^s=\frac{1}{\Gamma(-s)}\int_0^{\infty} \left[e^{-nu}-\sum_{j=0}^k (-1)^j \frac{(nu)^j}{j!} \right]\frac{du}{u^{s+1}}.
\]
Consequently,
\begin{eqnarray} \label{integralrep}
B(s,x)&=&\sum_{n=1}^{\infty} \frac{n^s}{n!}\frac{x^n}{n!}\\
&=&\sum_{n=1}^{\infty} \frac{x^n}{n!n!} \frac{1}{\Gamma(-s)}\int_0^{\infty} \left[e^{-nu}-\sum_{j=0}^k (-1)^j \frac{(nu)^j}{j!} \right]\frac{du}{u^{s+1}} \nonumber \\
&=&\frac{1}{\Gamma(-s)}\int_0^{\infty} \left[f(x,u)-\sum_{j=0}^k (-1)^j \frac{Q_j(x)}{j!}u^j \right]\frac{du}{u^{s+1}}, \qquad (k<s<k+1, \quad k \in \mathbb{N}_0). \nonumber
\end{eqnarray}
Thus, setting $k=0$ and $s=1/2$ in (\ref{integralrep}) finishes the proof.
\end{proof}
\begin{corollary} The following representation is valid:
\begin{equation} \label{intrep}
\sum_{n=0}^{\infty} \frac{\sqrt{n}}{n!}\frac{x^n}{n!}=\frac{1}{2 \sqrt{\pi}} \int_0^1 \left[B(0,x)-B(0,xv) \right] \frac{dv}{v (-\ln(v))^{3/2}}
\end{equation}
\end{corollary}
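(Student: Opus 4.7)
The plan is to derive the representation \eqref{intrep} directly from Theorem \ref{intrep1} by the change of variables $v = e^{-u}$. The first observation is that the auxiliary function in Theorem \ref{intrep1} can be rewritten as
\[
f(x,u) = \sum_{n=0}^{\infty} \frac{(xe^{-u})^n}{n!\, n!} = B(0, xe^{-u}),
\]
and in particular $f(x,0) = B(0,x)$. Thus the substitution $v = e^{-u}$ converts the $u$-integrand into an expression involving $B(0,x)$ and $B(0,xv)$, which is precisely the form appearing in the corollary.

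Next, I would carry out the substitution carefully. With $v = e^{-u}$ one has $u = -\ln v$, $du = -dv/v$, $u^{3/2} = (-\ln v)^{3/2}$ (note $-\ln v > 0$ for $v \in (0,1)$, so the power is real and positive), and the endpoints $u = 0,\infty$ correspond to $v = 1,0$. Plugging into \eqref{squareroot},
\[
-\frac{1}{2\sqrt{\pi}} \int_0^{\infty} \bigl[f(x,u)-f(x,0)\bigr]\frac{du}{u^{3/2}}
= -\frac{1}{2\sqrt{\pi}} \int_1^0 \bigl[B(0,xv)-B(0,x)\bigr]\frac{-dv/v}{(-\ln v)^{3/2}}.
\]
Reversing the limits of integration cancels one minus sign and the leading minus sign combines with the bracketed difference to flip its order, yielding
\[
\frac{1}{2\sqrt{\pi}} \int_0^1 \bigl[B(0,x)-B(0,xv)\bigr]\frac{dv}{v(-\ln v)^{3/2}},
\]
which is exactly \eqref{intrep}.

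The only substantive issue is justifying the manipulation near the endpoints. Near $u = 0$ (equivalently $v = 1$), the bracketed difference $f(x,u)-f(x,0)$ is $O(u)$ while $u^{-3/2}$ is only mildly singular, so the integrand behaves like $u^{-1/2}$ and is integrable; on the $v$-side, $-\ln v \sim 1-v$ near $v=1$ and $B(0,x)-B(0,xv) = O(1-v)$, confirming integrability there. Near $u = \infty$ (equivalently $v = 0$), $f(x,u) \to 1 = B(0,0)$ exponentially fast in $u$, so the original integral converges absolutely, and this translates on the $v$-side to the factor $(-\ln v)^{-3/2}$ damping the integrand as $v \to 0^+$. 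Because convergence is absolute on $(0,\infty)$, the change of variables is valid with no additional limiting argument needed, so the derivation is essentially a one-line calculation.
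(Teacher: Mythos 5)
Your proof is correct and follows essentially the same route as the paper: the substitution $v=e^{-u}$ applied to equation (\ref{squareroot}), with $f(x,u)=B(0,xe^{-u})$, together with a check of convergence at the endpoints (the paper verifies convergence via the identity $\int_0^1 \frac{1-v^n}{v(-\ln v)^{1+s}}\,dv=-n^s\Gamma(-s)$ for $0<s<1$). The sign bookkeeping in your computation is accurate, so nothing further is needed.
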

\begin{proof} The above representation follows directly from (\ref{squareroot}) with the change of variables $v=e^{-u}$. We remark that the convergence of the integral for every $x \in \mathbb{C}$ in the statement of the corollary can be directly verified using the identity
\[
\int_0^1 \frac{1-v^n}{v(-\ln(v))^{1+s}} dv=-n^s \Gamma(-s), \qquad n \in \mathbb{N}, \ 0<s<1.
\] 
\end{proof}
We conclude this section with two more integral representations which could be of use in further investigations. With $I_p$ denoting the modified Bessel functions (cf. (\ref{I_p})), the following formul\ae
\[
\varphi(x):=\sum_{n=0}^{\infty} \frac{\sqrt{n}}{n!}\frac{x^n}{n!}=\frac{1}{\sqrt{\pi}} \int_0^1 \frac{\sqrt{x} I_1(2 \sqrt{x t})}{\sqrt{t} \sqrt{-\ln t}}dt,
\]
and
\[
\varphi'(x)=\frac{1}{\sqrt{\pi}}\int_0^1 \frac{I_0(2 \sqrt{x t})}{\sqrt{- \ln t}}dt.
\]

 \section{Transformations of multiplier sequences} \label{transformations}This section is motivated by the observation that in some sense the sequence $\seq{\displaystyle{(\ln(k+2))/k!}}$ `lies between' two multiplier sequences (namely $\seq{1}$ and $\seq{\displaystyle{1/k!}}$), since $k! \succ \ln(k+2) \succ 1$ in terms of growth. By Theorem \ref{approxthm} the sequence $\seq{(H_{k+2}-\gamma)/k!}$ is a multiplier sequence, which itself is an approximation to the sequence $\seq{(\ln(k+2))/k!}$. Thus the notions of deformation, perturbation and general transformation of multiplier sequences arise naturally. A quite fruitful way of obtaining new multiplier sequences is to identify those transformations, which when applied to multiplier sequences, result again in multiplier sequences. We formulate the following general problem.
 
 \begin{problem}\label{birthofnewsequences} Let $\seq{\alpha_k}$ and $\seq{\beta_k}$ be multiplier sequences of non-negative real numbers. Characterize all functions $\Psi:\R \times \R \to \R$ such that $\seq{\Psi(\alpha_k,\beta_k)}$ is again a multiplier sequence. 
\end{problem}
There are some simple functions with this property, such as the projection onto either coordinate axis, or the function $\Psi(x,y)=xy$. 
\begin{lemma} Convex combinations of two multiplier sequences need not be multiplier sequences. That is,  in general $\Psi_{\lambda}(\alpha_k, \beta_k):=\lambda \alpha_k+ (1-\lambda) \beta_k$ ($\lambda \in [0,1]$) is not a solution to Problem \ref{birthofnewsequences}.
\end{lemma}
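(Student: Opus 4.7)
The statement is a non-existence claim stated universally over $\alpha_k$, $\beta_k$ and $\lambda$, so the entire proof reduces to producing a single counterexample: two multiplier sequences whose convex combination, for some $\lambda \in (0,1)$, fails the Transcendental Characterization in Theorem \ref{algchar}(iii). The plan is therefore to pick two simple sequences whose interpolating functions are easy to write down explicitly, so that the offending convex combination leads to a polynomial factor with an obvious pair of complex conjugate zeros.

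The two sequences I would use are $\alpha_k \equiv 1$ and $\beta_k = k^2$ for all $k \in \mathbb{N}_0$. The first is a multiplier sequence because $\sum_{k=0}^{\infty} x^k/k! = e^x \in \LP^+$. For the second, a short manipulation using $k^2 = k(k-1)+k$ gives
\[
\sum_{k=0}^{\infty} \frac{k^2}{k!}\,x^k \;=\; x^2 e^x + x e^x \;=\; x(x+1)e^x \;\in\; \LP^+,
\]
so by Theorem \ref{algchar}(iii), $\{k^2\}_{k=0}^{\infty}$ is a multiplier sequence.

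Next I would take $\lambda = 1/2$, so that $\Psi_{1/2}(\alpha_k,\beta_k) = (1+k^2)/2$. Multiplication of a sequence by a positive constant plainly preserves (or destroys) the multiplier sequence property, so it suffices to examine $\{1+k^2\}_{k=0}^{\infty}$. The associated entire function is
\[
\sum_{k=0}^{\infty} \frac{1+k^2}{k!}\,x^k \;=\; e^x + x(x+1)e^x \;=\; e^x\bigl(1 + x + x^2\bigr).
\]
The quadratic factor $1+x+x^2$ has discriminant $-3<0$, hence a pair of non-real conjugate zeros, so the function does not belong to $\LP I$. By Theorem \ref{algchar}(iii), $\{1+k^2\}_{k=0}^{\infty}$ (and therefore also its positive scalar multiple $\{(1+k^2)/2\}_{k=0}^{\infty}$) fails to be a multiplier sequence.

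There is no real obstacle here; the only thing to be mindful of is the distinction between the algebraic and transcendental characterizations, but both are available from Theorem \ref{algchar}, and using the transcendental form keeps the computation to a single line. If a Jensen-polynomial witness were preferred instead, one could equivalently exhibit $g_2(x) = \tfrac12 + 2x + \tfrac52 x^2$ with discriminant $4 - 5 = -1 < 0$, giving the same conclusion via Theorem \ref{algchar}(ii).
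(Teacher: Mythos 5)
Your proof is correct. The statement is an existential claim (a single counterexample suffices), and both you and the paper proceed by exhibiting one, but your example and verification differ in ways worth noting. The paper takes $\{k^2+k+1\}_{k=0}^{\infty}$ and $\{1/k!\}_{k=0}^{\infty}$ with $\lambda = 1/10$ and certifies failure via the algebraic characterization, applying the combined sequence to $(1+x)^4$ and asserting the resulting quartic is not in $\mathscr{L}$-$\mathscr{P}$ --- a check that requires either a numerical root computation or a subresultant/discriminant argument for a degree-four polynomial. Your choice of $\{1\}$ and $\{k^2\}$ with $\lambda = 1/2$ reduces everything to the transcendental characterization and the single identity $e^x + x(x+1)e^x = e^x(1+x+x^2)$, where the non-reality of the zeros is immediate from the discriminant of a quadratic; all three memberships ($e^x$, $x(x+1)e^x$, and the failure of $e^x(1+x+x^2)$) are verifiable by hand in one line each. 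The one point you were right to handle implicitly but should keep in mind: $\{k^2\}$ has $\gamma_0 = 0$, and one must confirm that $x(x+1)e^x$ is genuinely of the form required by Definition \ref{LPI} (it is, with $m=1$, $\sigma = 1$, and the single zero $x_1 = 1$), so the vanishing initial term causes no difficulty. Your backup Jensen-polynomial witness $g_2(x) = \tfrac12 + 2x + \tfrac52 x^2$ is also computed correctly.
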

\begin{proof} Let$\seq{\alpha_k}=\seq{k^2+k+1}$ and $\seq{\beta_k}=\seq{\frac{1}{k!}}$. With $\lambda=\frac{1}{10}$, we get the sequence $\seq{\frac{k^2+k+1}{10}+\frac{9}{10 k!}}$, which, when applied to $(1+x)^4$ yields the polynomial
\[
p(x)=1+\frac{24}{5}x+\frac{69}{10}x^2+\frac{29}{5}x^3+\frac{171}{80}x^4 \notin \LP.
\]
\end{proof}
Simple examples show that there exists multiplier sequences of non-negative terms, whose linear combination is again a multiplier sequence.
\begin{proposition} (\cite[p.\,198]{rs}) Suppose that $p,q \in \LP^+$ are polynomials with strictly interlacing zeros. Then for any $a,b \in \R^+$, the polynomial $a q(x)+b p(x) \in \LP^+$. Thus, any positive linear combination of the multiplier sequences $\seq{q(k)}$ and $\seq{p(k)}$ is again a multiplier sequence. 
\end{proposition}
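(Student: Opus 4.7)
To prove the Proposition, my plan is to separate the polynomial assertion from the multiplier-sequence assertion and then reduce the latter to the former. Every polynomial in $\LP^+$ has all of its zeros in $(-\infty,0]$, so Laguerre's theorem (as invoked in the proof of Theorem~\ref{approxthm}) implies that if a polynomial $\varphi$ lies in $\LP^+$, then $\seq{\varphi(k)}$ is a multiplier sequence. Consequently, once we show $f(x):=aq(x)+bp(x)\in\LP^+$ for every $a,b\in\R^+$, both claims follow.

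Non-negativity of the Taylor coefficients of $f$ is immediate from $p,q\in\LP^+$ and $a,b>0$, so the entire content is the reality of the zeros of $f$, which I would establish by a classical sign-alternation argument. Let $\alpha_1<\alpha_2<\cdots<\alpha_n$ be the zeros of $p$. Strict interlacing excludes common zeros of $p$ and $q$, so $f(\alpha_i)=aq(\alpha_i)\neq 0$ for each $i$; furthermore, since a zero of $q$ lies strictly between every consecutive pair $\alpha_i,\alpha_{i+1}$, the numbers $q(\alpha_1),\ldots,q(\alpha_n)$ alternate in sign. The same is therefore true of $f(\alpha_1),\ldots,f(\alpha_n)$, and the intermediate value theorem produces $n-1$ real zeros of $f$, one in each open interval $(\alpha_i,\alpha_{i+1})$.

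To account for the remaining real zeros, I would note that every nonzero polynomial in $\LP^+$ has strictly positive leading coefficient (from the factored form in Definition~\ref{LPI}), so no cancellation occurs and $\deg f=\max(\deg p,\deg q)$. Strict interlacing forces $\deg q\in\{n-1,n,n+1\}$, and in each of these three cases a short comparison of the signs of $f(\alpha_1)$ and $f(\alpha_n)$ against the leading-term asymptotics of $f$ as $x\to\pm\infty$ locates the remaining one (or two) real zero(s) outside $[\alpha_1,\alpha_n]$, once again by the intermediate value theorem. Tallying these contributions yields exactly $\deg f$ real zeros, so $f\in\LP$; combined with the non-negativity of its coefficients, $f\in\LP^+$.

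The step requiring the most care is this last one: tracking which end of the real axis carries the extra zero(s) in each of the three degree configurations, and confirming that the signs of $f$ at $\pm\infty$, determined by the parity of $\deg f$ and the positivity of the leading coefficient, correctly mesh with the alternating sign pattern on $\{\alpha_1,\ldots,\alpha_n\}$ so that no real zero is over- or under-counted. Everything else in the argument is routine.
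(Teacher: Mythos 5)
Your argument is correct, but note that the paper itself supplies no proof of this Proposition: it is quoted verbatim from Rahman--Schmeisser \cite[p.\,198]{rs}, so there is no in-text argument to compare against. What you have written is essentially the classical Hermite--Kakeya--Obreschkoff sign-change proof that underlies the cited result. Your reduction of the multiplier-sequence assertion to the polynomial assertion is sound and consistent with how the paper treats polynomially interpolated sequences elsewhere: a polynomial in $\LP^+$ has all its zeros in $(-\infty,0]$ (real zeros plus non-negative coefficients force non-positive zeros), so Laguerre's theorem, exactly as invoked in the proof of Theorem \ref{approxthm}, gives that $\seq{f(k)}$ is a CZDS and \emph{a fortiori} a multiplier sequence. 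The core of your sign-alternation argument is also right: strict interlacing places exactly one simple zero of $q$ in each gap $(\alpha_i,\alpha_{i+1})$ and none at the $\alpha_i$, so $q(\alpha_i)$, and hence $f(\alpha_i)=aq(\alpha_i)$, alternates in sign, yielding $n-1$ zeros by the intermediate value theorem; positivity of $a,b$ and of the leading coefficients rules out cancellation, so $\deg f=\max(\deg p,\deg q)$, and the endpoint analysis via the parity of $\deg f$ accounts for the remaining zero(s). The only bookkeeping you flag as delicate — matching the sign of $f(\alpha_1)$ (which is $(-1)^{n-1}$ times the sign of the leading coefficient of $q$ when all zeros of $q$ exceed $\alpha_1$, and similarly in the other configurations) against the sign of $f$ at $\mp\infty$ — does work out in all three degree cases, so the count closes. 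In short: a complete and correct elementary proof of a statement the paper only cites.
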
 

\begin{lemma} Convex geometric combinations of two multiplier sequences need not be multiplier sequences. That is,  in general $\Psi_{\lambda}(\alpha_k, \beta_k):= \alpha_k^{\lambda}\beta_k^{(1-\lambda)}$, $\lambda \in [0,1]$, is not a solution to Problem \ref{birthofnewsequences}.
\end{lemma}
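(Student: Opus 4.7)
The plan is to exhibit an explicit counterexample. I would take $\alpha_k = k^2+k+1$ and $\beta_k = 1$, both of which are known to be multiplier sequences: for $\alpha$, this is the identity $\sum (k^2+k+1)x^k/k! = e^x(1+x)^2 \in \LP^+$ recorded in Example \ref{conversefalse}, while $\{\beta_k\} = \{1\}$ is trivial. With $\lambda = 1/2$, the candidate geometric combination is
\[
\gamma_k := \alpha_k^{1/2}\beta_k^{1/2} = \sqrt{k^2+k+1}.
\]
By the algebraic characterization in Theorem \ref{algchar}, to conclude that $\{\gamma_k\}$ is not a multiplier sequence it suffices to exhibit a single Jensen polynomial of $\{\gamma_k\}$ possessing a non-real zero.

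The task thus reduces to a finite calculation. I would target the fourth Jensen polynomial
\[
g_4(x) = 1 + 4\sqrt{3}\,x + 6\sqrt{7}\,x^2 + 4\sqrt{13}\,x^3 + \sqrt{21}\,x^4
\]
and verify that $g_4$ has exactly two real zeros, so that its remaining pair of zeros must be a complex conjugate pair. Since $g_4(0) > 0$ and $g_4(x) \to +\infty$ as $x\to-\infty$, the sign changes can be located by a handful of evaluations on the negative axis; a quick numerical sweep places sign changes only near $x\approx -0.3$ and $x\approx -0.53$. The upper bound of two real zeros can then be made rigorous either by analysing the critical points of $g_4$ directly (its cubic derivative, together with the sign pattern of $g_4$ on the negative axis, forbids a third or fourth real zero) or, equivalently, by computing the discriminant of $g_4$ and verifying it is negative.

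The main obstacle is not conceptual but one of search: I am betting that the simplest natural test case — a square root of the simplest non-geometric polynomial MS beyond constants — already breaks the multiplier property at a very low-order Jensen polynomial. If $g_4$ turned out to be totally real, natural fallbacks would be to inspect higher-degree Jensen polynomials of the same $\{\gamma_k\}$, or to replace the pair by $\{k^2+k+1\}$ and $\{1/k!\}$ with various $\lambda\in(0,1)$ — precisely the pattern used in the immediately preceding lemma on convex linear combinations — until a non-real zero surfaces.
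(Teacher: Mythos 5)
Your proposal is correct and coincides with the paper's own proof: the paper uses exactly the pair $\seq{k^2+k+1}$, $\seq{1}$ with $\lambda=1/2$ and exhibits the same fourth Jensen polynomial $1+4\sqrt{3}x+6\sqrt{7}x^2+4\sqrt{13}x^3+\sqrt{21}x^4$, asserting it has two non-real zeros. Your added sketch of how to certify "exactly two real zeros" (sign changes near $-0.29$ and $-0.52$, plus a check that the quartic stays positive on the rest of the negative axis, e.g.\ near $x\approx -1.1$ where it dips only to about $0.10$) merely makes explicit what the paper leaves to the reader.
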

\begin{proof} Consider $\seq{\alpha_k}=\seq{k^2+k+1}$ and $\seq{\beta_k}=\seq{1}$. Calculating $\Psi_{1/2}(\alpha_k,\beta_k)[(1+x)^4]$ gives the polynomial
\[
q(x)=1+4 \sqrt{3}x+6\sqrt{7}x^2+4\sqrt{13}x^3+\sqrt{21}x^4,
\]
which has two non-real zeros.
\end{proof}
The following proposition provides a `continuously deformed' family of multiplier sequences.
\begin{proposition}\label{parametersequence} Suppose that 
\[
\varphi(x)=\sum_{k=0}^{\infty} \frac{\gamma_k}{k!}x^k \in \LP^+.
\]
Then for $t \in [0,1]$ the sequence $\seq{B^{\varphi}_k(t)}$ is a multiplier sequence, where
\begin{equation}\label{Bstuff}
B^{\varphi}_k(t)=\sum_{j=0}^k \binom{k}{j}(1-t)^j \gamma_{k-j} t^{k-j}.
\end{equation}
\end{proposition}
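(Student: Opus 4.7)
My plan is to identify the generating function of the sequence $\{B^{\varphi}_k(t)\}_{k=0}^{\infty}$ and then invoke the transcendental characterization (Theorem \ref{algchar}(iii)).

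First I would compute the exponential generating function
\[
\Phi(x,t) := \sum_{k=0}^{\infty} B^{\varphi}_k(t) \frac{x^k}{k!}.
\]
Substituting the definition (\ref{Bstuff}) and expanding $\binom{k}{j} = k!/(j!(k-j)!)$ gives
\[
\Phi(x,t) = \sum_{k=0}^{\infty} \sum_{j=0}^{k} \frac{((1-t)x)^j}{j!} \cdot \frac{\gamma_{k-j}(tx)^{k-j}}{(k-j)!}.
\]
Re-indexing with $m = k-j$ decouples the sums and yields the factorization
\[
\Phi(x,t) = \Bigl(\sum_{j=0}^{\infty} \frac{((1-t)x)^j}{j!}\Bigr) \Bigl(\sum_{m=0}^{\infty} \frac{\gamma_m (tx)^m}{m!}\Bigr) = e^{(1-t)x}\,\varphi(tx).
\]

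The second step is to show that $\Phi(\cdot, t) \in \LP^+$ for every fixed $t \in [0,1]$. Since $\varphi \in \LP^+$, the rescaling $x \mapsto tx$ (with $t \geq 0$) preserves non-negativity of coefficients and, via the representation in Definition \ref{LP}, preserves membership in $\LP$; hence $\varphi(tx) \in \LP^+$. For $t \in [0,1]$ the factor $e^{(1-t)x}$ is in $\LP^+$ as well (non-negative coefficients, no zeros). The class $\LP^+$ is closed under multiplication, since $\LP$ is closed under multiplication and the Cauchy product of two power series with non-negative coefficients has non-negative coefficients. Therefore $\Phi(x,t) = e^{(1-t)x}\varphi(tx) \in \LP^+ \subset \LP I$.

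Finally, Theorem \ref{algchar}(iii) applied to $\Phi(\cdot,t)$ gives precisely that the sequence of coefficients $\{B^{\varphi}_k(t)\}_{k=0}^{\infty}$ (viewed through the normalization $B^{\varphi}_k(t)/k!$ that matches the theorem's convention) is a multiplier sequence, completing the proof. I do not anticipate a substantive obstacle here: the only delicate point is to keep the two scalings straight in the exponential generating function so that the factorization into $e^{(1-t)x}\varphi(tx)$ is clean, after which the $\LP^+$-membership is immediate and Theorem \ref{algchar} does the rest.
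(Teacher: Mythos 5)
Your proof is correct and follows essentially the same route as the paper, which simply cites the generating relation $e^{(1-t)x}\varphi(xt)=\sum_{k=0}^{\infty}\frac{B^{\varphi}_k(t)}{k!}x^k$ and calls the result a straightforward consequence. You have merely filled in the Cauchy-product computation and the $\LP^+$-closure details that the paper leaves implicit.
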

\begin{proof} This is a straightforward consequence of the generating relation
\[
e^{(1-t)x}\varphi(xt)=\sum_{k=0}^{\infty} \frac{B^{\varphi}_k(t)}{k!}x^k.
\]
\end{proof}
Note that $\seq{B^{\varphi}_k(0)}=\seq{\gamma_0}$ and $\seq{B^{\varphi}_k(1)}=\seq{\gamma_k}$.
\begin{corollary}\label{twoparametersequence} Let $\varphi, \Phi \in \LP^+$, and let $\seq{B^{\varphi}_k(t)}$ and $\seq{B^{\Phi}_k(s)}$ be the associated multiplier sequences (cf. equation (\ref{Bstuff})). Then the sequence
\begin{equation}\label{C_k}
\seq{C^{\varphi,\Phi}_k(t,s)}:=\seq{\sum_{j=0}^k \binom{k}{j} B^{\varphi}_j(t) B^{\Phi}_{k-j}(s)}
\end{equation}
is a multiplier sequence for all $(t,s) \in [0,1] \times [0,1]$.
\end{corollary}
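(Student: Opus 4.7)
The plan is to reduce the statement to a single application of the transcendental characterization of multiplier sequences (Theorem \ref{algchar}(iii)), after recognizing the sequence $\seq{C^{\varphi,\Phi}_k(t,s)}$ as the exponential convolution arising from multiplying the two generating functions supplied by Proposition \ref{parametersequence}.

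First I would invoke Proposition \ref{parametersequence} twice to obtain the two generating identities
\[
e^{(1-t)x}\varphi(xt)=\sum_{k=0}^{\infty}\frac{B^{\varphi}_k(t)}{k!}x^k \in \LP^+, \qquad e^{(1-s)x}\Phi(xs)=\sum_{k=0}^{\infty}\frac{B^{\Phi}_k(s)}{k!}x^k \in \LP^+.
\]
A routine Cauchy-product calculation, pulling out the common factor $1/k!$ via a binomial coefficient, then shows that
\[
G(x) := e^{(2-t-s)x}\,\varphi(xt)\,\Phi(xs) = \sum_{k=0}^{\infty}\frac{1}{k!}\left(\sum_{j=0}^{k}\binom{k}{j}B^{\varphi}_j(t)B^{\Phi}_{k-j}(s)\right)x^k = \sum_{k=0}^{\infty}\frac{C^{\varphi,\Phi}_k(t,s)}{k!}x^k,
\]
so $G$ is precisely the exponential generating function of $\seq{C^{\varphi,\Phi}_k(t,s)}$.

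Next I would verify that $G\in\LP^+$. For $(t,s)\in[0,1]\times[0,1]$ the exponent $2-t-s$ is non-negative, so $e^{(2-t-s)x}\in\LP^+$. The substitutions $x\mapsto tx$ and $x\mapsto sx$ with $t,s\geq 0$ preserve membership in $\LP^+$ (when $t=0$ the factor $\varphi(xt)$ degenerates to the non-negative constant $\gamma_0$, which still lies in $\LP^+$, and similarly for $s=0$). Since $\LP^+$ is closed under products, it follows that $G\in\LP^+$. By the transcendental characterization of multiplier sequences (Theorem \ref{algchar}(iii)), $\seq{C^{\varphi,\Phi}_k(t,s)}$ is a multiplier sequence.

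There is no substantial obstacle here: the entire argument rests on the observation that exponential convolution of sequences corresponds to multiplication of exponential generating functions, combined with the fact that $\LP^+$ is multiplicatively closed and stable under non-negative rescaling of the variable. The only point requiring mild care is the bookkeeping in the Cauchy product and the verification of the degenerate cases $t=0$ or $s=0$; otherwise the proof is essentially a one-line consequence of Proposition \ref{parametersequence}.
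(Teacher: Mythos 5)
Your proof is correct and follows essentially the same route as the paper, which simply cites the generating relation $e^{((1-t)+(1-s))x}\varphi(xt)\Phi(xs)=\sum_{k=0}^{\infty}\frac{C^{\varphi,\Phi}_k(t,s)}{k!}x^k$ and leaves the Cauchy-product computation, the closure of $\LP^+$ under products and non-negative dilations, and the appeal to the transcendental characterization implicit. You have merely made those implicit steps explicit, including the harmless degenerate cases $t=0$ or $s=0$.
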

\begin{proof} The representation (\ref{C_k}) is a consequence of the generating relation
\begin{equation} \label{2parametergenerating}
e^{((1-t)+(1-s))x} \varphi(xt)\Phi(xs)=\sum_{k=0}^{\infty} \frac{C^{\varphi,\Phi}_k(t,s)}{k!}x^k.
\end{equation}
\end{proof}
\begin{remark} Several observations are in order.
\begin{itemize}
\item[(a)] If $f:[0,1] \to [0,1]$, then the sequence $\{C_k^{\varphi,\Phi}(t,f(t))\}_{k=0}^{\infty}$ is a multiplier sequence for all $t \in [0,1]$. 
\item[(b)] Proposition \ref{parametersequence} and Corollary \ref{twoparametersequence} involve Cauchy products with parameters. Let $\ds{\varphi (t)=\sum_{k=0}^{\infty} \frac{\gamma_k}{k!}x^k}\in \LP^+$ and $\ds{\Phi (t)=\sum_{k=0}^{\infty} \frac{\beta_k}{k!}x^k}\in \LP^+$. Then the Cauchy product  of $\varphi(t)$ and $\Phi(t)$ is also in $\LP^+$; that is,
\[
\varphi(t)\, \Phi(t)= \sum_{k=0}^{\infty} c_k\frac{t^k}{k!} \in \LP^+,\qquad{\text where} \qquad c_k:=\sum_{j=0}^k \binom{k}{j} \gamma_j\,\beta_{k-j},
\]
and hence $\{c_k\}_{k=0}^{\infty}$ is a multiplier sequence. 
\item[(c)] The polynomial $B_k^{\varphi}(t)$ (see (4.1)) can also be expressed in terms of the Jensen polynomials $\{g_j(t)\}_{j=0}^{\infty}$ associated with $\varphi (t)$:
\[
B_k^{\varphi}(t)=\sum_{j=0}^k\binom{k}{j} g_j(t)(-1)^{j+k}t^{k-j}.
\]
\item[(d)] Finally, we remark that for each fixed $k\in \Bbb N$, the polynomial $B_k^{\varphi}(t)$ has only real zeros.  A short proof of this assertion is as follows. For fixed $k\in \Bbb N$ and $t\not= 0$, a calculation shows that
\[
t^k B_k^{\varphi}\left(\frac{1}{t}\right)=t^k\sum_{j=0}^k \binom{k}{j} \left(1-\frac{1}{t}\right)^j\gamma_{k-j}\frac{1}{t^{k-j}}=\sum_{j=0}^k  \binom{k}{j} (t-1)^j\gamma_{k-j},
\]
and whence we infer that  $B_k^{\varphi}(t)$ has only real zeros. Here a caveat is in order since, in general,  the zeros of  $B_k^{\varphi}(t)$ need not be all negative.
\end{itemize}
\end{remark}
\begin{definition}\label{c_krep} Let $\gamma_k \in \mathbb{R}$ for $k=0,1,2,\ldots$. We say that the sequence $\seq{\gamma_k}$ has a $C_k$-representation, if there exist functions $\varphi, \Phi \in \LP^+$ (not necessarily distinct) and $s,t \in \mathbb{R}$ such that $\gamma_k=C_k^{\varphi,\Phi}(t,s)$ for all $k \in \mathbb{N}_0$, where $C_k^{\varphi,\Phi}(t,s)$ is as defined in equation (\ref{C_k}).
\end{definition}
\begin{theorem} 
(1) Every polynomially interpolated multiplier sequence of non-negative terms  can be written as a sequence $\displaystyle{\seq{C_k^{\varphi,\Phi}(t,s)}}$ for some functions $\varphi, \Phi \in \LP^+$ and $(t,s) \in [0,1] \times [0,1]$. The choice of $\varphi, \Phi, t$ and $s$ in this representation need not be unique. \newline
(2) Every geometric multiplier sequence (i.e., a sequence of the form $\seq{r^k}$, $r \in \mathbb{R}$) has a $C_k$-representation.
\end{theorem}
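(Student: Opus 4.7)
My plan is to solve the generating relation (\ref{2parametergenerating}) directly by choosing one of the two factors to carry all of the Taylor data and the other to be the trivial function $1 \in \LP^+$. This reduces both parts of the theorem to matching a Taylor series on the nose.

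For (1), let $\seq{\gamma_k}$ be a polynomially interpolated multiplier sequence of non-negative terms, and set $\psi(x) := \sum_{k=0}^\infty \frac{\gamma_k}{k!}x^k$. Theorem \ref{algchar}(iii) guarantees $\psi \in \LP I$, and since the $\gamma_k$ are non-negative, in fact $\psi \in \LP^+$. Choosing $\varphi := \psi$, $\Phi := 1 \in \LP^+$, and $t = s = 1$, the generating relation (\ref{2parametergenerating}) collapses to
\[
e^{(2-1-1)x}\psi(x) \cdot 1 = \psi(x) = \sum_{k=0}^\infty \frac{\gamma_k}{k!}x^k,
\]
so that $C_k^{\psi, 1}(1, 1) = \gamma_k$ for every $k \in \mathbb{N}_0$. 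Non-uniqueness then follows by swapping the roles of the two factors: $(\varphi, \Phi, t, s) = (1, \psi, 1, 1)$ represents the same sequence.

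For (2), fix $r \in \R$ and take $\varphi = \Phi = 1 \in \LP^+$ together with any $(t, s) \in \R^2$ satisfying $t + s = 2 - r$ (for instance $t = s = (2-r)/2$). Then (\ref{2parametergenerating}) becomes
\[
e^{(2-t-s)x} \cdot 1 \cdot 1 = e^{rx} = \sum_{k=0}^\infty \frac{r^k}{k!}x^k,
\]
yielding $C_k^{1, 1}(t, s) = r^k$; here it is essential that Definition \ref{c_krep} permits arbitrary real $s, t$, so the construction covers every $r \in \R$, including $r > 2$ and $r < 0$. No real obstacle arises in either argument — the key observation is simply that a non-negative multiplier sequence already has its generating function in $\LP^+$, so one may push all of the structure into a single factor. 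The more informative question which the statement only gestures at is whether $\varphi$ and $\Phi$ can additionally be required to be \emph{polynomials} of prescribed degrees (exploiting the factorization $\psi(x) = e^x Q(x)$ with $Q$ a polynomial, which holds for polynomially interpolated sequences), and that is where I would expect the genuine technical content to lie.
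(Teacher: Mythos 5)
Your argument is correct, and although it rests on the same basic device as the paper's proof --- namely, solving the generating relation (\ref{2parametergenerating}) with a degenerate choice of parameters that concentrates all of the data in a single factor --- the reduction you use is genuinely different in part (1) and worth comparing. The paper exploits the polynomial hypothesis: it writes $\sum_{k}p(k)x^k/k!=\widetilde{p(x)}e^x$ with $\widetilde{p(x)}\in\LP^+$ a polynomial (computed via Stirling numbers of the second kind), takes $(t,s)=(1,0)$, $\varphi=\widetilde{p(x)}$, $\Phi\equiv 1$, and derives non-uniqueness from the distinct factorizations $\widetilde{p(x)}=q_1(x)q_2(x)$ in $\LP^+$ when $\deg \widetilde{p(x)}\geq 2$. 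You instead put $\varphi=\psi=\sum_k \gamma_k x^k/k!$ itself, $(t,s)=(1,1)$, $\Phi\equiv 1$; since $B_k^{\psi}(1)=\gamma_k$ and $B_k^{1}(1)=\delta_{k0}$, this is valid and in fact proves something strictly stronger: \emph{every} multiplier sequence of non-negative terms, polynomially interpolated or not, arises as $\seq{C_k^{\varphi,\Phi}(t,s)}$ with $(t,s)\in[0,1]^2$. Your non-uniqueness argument (interchanging the two factors) is legitimate, though it buys less insight than the paper's factorization argument. For part (2) your treatment is actually the cleaner one: since Definition \ref{c_krep} permits arbitrary real $t,s$, the choice $\varphi=\Phi\equiv 1$ with $t+s=2-r$ gives $C_k^{1,1}(t,s)=\bigl((1-t)+(1-s)\bigr)^k=(2-t-s)^k=r^k$ for every $r\in\R$ at once, whereas the paper restricts to $[0,1]^2$ (forcing $r\in[0,2]$) and defers the remaining values of $r$ to an unspecified choice of $\varphi$ and $\Phi$.
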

\begin{proof} (1) For any $p \in \mathbb{R}[x]$,  $\displaystyle{\sum_{k=0}^{\infty} \frac{p(k)}{k!}x^k}=\widetilde{p(x)}e^x$, where
\[
\widetilde{p(x)}=a_0+\sum_{j=1}^n \left( \sum_{k=j}^n a_k S_2(k,j)\right) x^j \in \LP^+,
\]
and $S_2(k,j)$ denote Stirling numbers of the second kind (see, for example, \cite[Ch.7]{moll}).  Suppose now that $\seq{\gamma_k}$ is a multiplier sequence of non-negative terms, and suppose that $p \in \mathbb{R}[x]$ is such that $p(k)=\gamma_k$ for $k=0,1,2,\ldots$. The equation \begin{equation} \label{identification}
e^{((1-t)+(1-s))x} \varphi(xt)\Phi(xs)=\widetilde{p(x)}e^x
\end{equation}
leads to the identification $\seq{\gamma_k}=\seq{C_k^{\varphi,\Phi}(t,s)}$. Indeed, equation (\ref{identification}) will be satisfied if (a) $1=t+s$, and (b) $\varphi(xt)\Phi(xs)=\widetilde{p(x)}$.
The selection $t=1,s=0$, $\Phi(x) \equiv 1$ and $\varphi(x)=\widetilde{p(x)}$ obviously satisfies $(a)$ and $(b)$. Thus, $\seq{\gamma_k}=\seq{C_k^{\widetilde{p(x)},1}(1,0)}$. The non-uniqueness is easy to ascertain, for any $\widetilde{p(x)} \in \LP^+$ of degree two or higher has distinct factorizations into products of the form $\widetilde{p(x)}=q_1(x)q_2(x)$, with $q_1(x),q_2(x) \in \LP^+$.
\newline (2) Again, solving $(\ref{identification})$ with $e^{rx}$ on the right hand side is possible with $\varphi \equiv \Phi \equiv 1$ and $t+s=2-r$. In particular, $\seq{r^k}=\seq{C_k^{1,1}(t,2-t-r)}$. We remark that the restrictions on $t,s$ force $r \in [0,2]$. Geometric sequences with bases greater than two can still be produced, with the appropriate choice of $\varphi$ and $\Phi$.
\end{proof}
\begin{example} Suppose that $\varphi(x)=\Phi(x)$. In this case we have 
\[
C_k^{\varphi, \varphi}(t,s)=\sum_{j=0}^k \binom{k}{j} B_j^{\varphi}(t) B_{k-j}^{\varphi}(s), \quad s,t \in \R.
\]
For $t,s \in (0,1)$, set
\[
c_1=\frac{t}{1-t}, \qquad c_2=\frac{s}{1-s}, \qquad \text{and} \quad c_3=\frac{1-t}{1-s}.
\]
Then
\begin{equation}\label{examplephi}
C_k^{\varphi, \varphi}(t,s)=(1-s)^k \sum_{j=0}^k \binom{k}{j}c_3^j g_j(c_1)g_{k-j}(c_2),
\end{equation}
where $g_k(x)$ denotes the $k$th Jensen polynomial associated with the real entire function $\varphi$.
In particular, if $t=s=\frac{1}{2}$, then
\begin{eqnarray*}
 C_k^{\varphi, \varphi}\left( \frac{1}{2},\frac{1}{2}\right)&=&\left(\frac{1}{2} \right)^k \sum_{j=0}^k \binom{k}{j} 
 g_j(1) g_{k-j}(1).
\end{eqnarray*}
For the readers convenience, we give here (without proof) a teaser of choices for $\varphi$ in equation (\ref{examplephi}), and the resulting family of multiplier sequences.
\begin{enumerate}
\item If we set $\displaystyle{\varphi(x)=\sum_{k=0}^{\infty} \frac{x^k}{k! k!}}$,  and select $s,t \in (0,1)$, the multiplier sequence we obtain is
\[
\displaystyle{\seq{C_k^{\varphi, \varphi}(t,s)}=\seq{(1-s)^k \sum_{j=0}^k\left(\frac{1-t}{1-s} \right)^j \binom{k}{j} L_j\left(\frac{t}{t-1} \right)L_{k-j}\left( \frac{s}{s-1}\right)}},
\]
where $L_j(x)$ denotes the $j$th Laguerre polynomial (see for example \cite[Ch.\,12]{rainville}).
\item The choice $\displaystyle{\varphi(x)=\sum_{k=0}^{\infty} \frac{x^k}{(2k)!}}$, and $s,t \in (0,1)$ leads to the following multiplier sequences involving hypergeometric functions:
\[
\displaystyle{\seq{C_k^{\varphi, \varphi}(t,s)}=\seq{(1-s)^k \sum_{j=0}^k \binom{k}{j}\left(\frac{1-t}{1-s} \right)^j {}_1F_1\left[-j;\frac{1}{2};\frac{t}{4(t-1)}\right] {}_1F_1\left[-(k-j);\frac{1}{2};\frac{s}{4(s-1)}\right] }}.
\] 
\item Finally, selecting $\varphi(x)=e^{rx}$, and $s,t \in (0,1)$ leads to the multiplier sequences
\[
\displaystyle{\seq{C_k^{\varphi, \varphi}(t,s)}=\seq{(2+(s+t)(r-1))^k}}.
\]
\end{enumerate}
\end{example}
\section{Scholia and Open problems}\label{openproblems}
In the applications of the theory of Bessel functions (or hypergeometric functions ${}_pF_q$) it is frequently important to determine the distribution of zeros of certain combinations of Bessel functions (or hypergeometric functions).  In this section, we propose some techniques involving multiplier sequences that may  shed light on several questions and intriguing problems that arose in the course of our analysis, but remain unsolved at this time. Generalities aside, we commence here with a concrete example to illustrate the ideas involved. 
 
 \begin{example}\label{thirtytwo} Let $p(x):=cx^2+a x +b$,  where $a, b, c \ge 0$. If $a=b=1$ and $c>0$, so that $p(x)=cx^2+x+1$, then
\[
 \sum_{k=0}^\infty \frac{p(k)}{k!}x^k=e^x(1+x)(1+cx)\in\LP^+,
 \]
 and whence $\{p(k)\}_{k=0}^\infty$ is a multiplier sequence although $p$ need not have any real zeros.  Also, we hasten to note  that the {\it Hadamard product}, $\{\frac{p(k)}{k!}\}_{k=0}^\infty$,   of the two multiplier sequences $\{p(k)\}_{k=0}^\infty$ and $\{\frac{1}{k!}\}_{k=0}^\infty$  is again a multiplier sequence. In particular, it follows that $ \sum_{k=0}^\infty \frac{p(k)}{k!k!}x^k\in\LP^+$. We next consider the following query. Given a fixed $c>0$, does the  entire function $f(x):= (1+cx)I_0(2\sqrt{x})+\sqrt{x}I_1(2\sqrt{x})$, where $I_p(x)$ denotes the  modified Bessel function of the first kind of order $p$ (see equation (\ref{I_p})), have only real (positive) zeros? Calculating the Taylor coefficients of the entire function $f(x)$ (of order $1/2$), we find that 
 \[
 f(x)=\sum_{k=0}^\infty \frac{p(k)}{k!k!}x^k\in\LP^+
 \]
 since $\{\frac{p(k)}{k!}\}_{k=0}^\infty$ is a multiplier sequence  for any $c>0$.
If $p(x)$ is a quadratic polynomial with non-negative Taylor coefficients, then as was noted in the Introduction, $\{p(k)\}_{k=0}^\infty$ need not be a  multiplier sequence.  However, it is a noteworthy  fact that $\{\frac{p(k)}{k!}\}_{k=0}^\infty$ is always multiplier sequence, and $p(x):=cx^2+a x +b$,  where $a, b, c \ge 0$.
\end{example}
 \begin{proposition}\label{quad_by_fact}
 If $p(x):=cx^2+a x +b$,  where $a, b, c \ge 0$, then $\{\frac{p(k)}{k!}\}_{k=0}^\infty$ is a multiplier sequence.
\end{proposition}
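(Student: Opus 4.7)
The plan is to verify the transcendental criterion in Theorem \ref{algchar}(iii): $\{p(k)/k!\}_{k\ge 0}$ is a multiplier sequence if and only if
\[
F(x) := \sum_{k=0}^{\infty} \frac{p(k)}{k!\,k!}\, x^k \in \LP I,
\]
and since $p(k) \ge 0$, this reduces to showing $F \in \LP^+$. Setting $\Phi(x) := I_0(2\sqrt{x}) = \sum_{k \ge 0} x^k/(k!k!)$, a short reindexing of the defining series gives $\sum_{k\ge 0}(k/(k!k!))\,x^k = \sqrt{x}\,I_1(2\sqrt{x}) = x\Phi'(x)$ and $\sum_{k\ge 0}(k^2/(k!k!))\,x^k = x\Phi(x)$, so that
\[
F(x) = (b+cx)\Phi(x) + a x\Phi'(x) = b\Phi(x) + x\bigl(c\Phi(x) + a\Phi'(x)\bigr).
\]

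The case $a = 0$ is immediate, as $F = (b+cx)\Phi$ is a product of two $\LP^+$ functions. Assume $a > 0$. The classical interlacing $0 < j_{0,n} < j_{1,n} < j_{0,n+1}$ of the positive zeros of $J_0$ and $J_1$ translates to strict interlacing of the negative real zeros of $\Phi$ (at $z_n := -j_{0,n}^2/4$) and of $\Phi'$ (at $-j_{1,n}^2/4$). Both functions are real entire of order $1/2$ lying in $\LP^+$, so the Hermite-Biehler theorem applied to the pair $(\Phi,\Phi')$ yields $\psi(x) := c\Phi(x) + a\Phi'(x) \in \LP$; positivity at the origin ($\psi(0) = a+c > 0$) then upgrades this to $\psi \in \LP^+$.

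The remaining step is a second application of Hermite-Biehler to the decomposition $F = b\Phi + x\psi$. I would verify that the zeros $\{0\} \cup \{w_n\}_{n\ge 1}$ of $x\psi$ strictly interlace the zeros $\{z_n\}_{n\ge 1}$ of $\Phi$. Since the first zero of $\Phi'$ below the origin is $-j_{1,1}^2/4 < z_1$, both $\Phi$ and $\Phi'$ are positive on $(z_1, 0]$, so $\psi$ has no zero there. For $n \ge 1$, $\psi(z_n) = a\Phi'(z_n)$; the sign of $\Phi'(z_n)$ alternates with $n$ (by Rolle applied between consecutive zeros of $\Phi$, together with $\Phi'(z_1) > 0$), which forces exactly one zero $w_n$ of $\psi$ into each interval $(z_{n+1}, z_n)$. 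This yields the strict interlacing chain
\[
0 > z_1 > w_1 > z_2 > w_2 > z_3 > \cdots ,
\]
and Hermite-Biehler then delivers $F \in \LP$. Non-negativity of the Taylor coefficients of $F$ finally promotes this to $F \in \LP^+$, which completes the plan.

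The main obstacle is the sign analysis pinning each $w_n$ into the correct subinterval and ruling out a zero of $\psi$ in $(z_1, 0)$; once that sign bookkeeping is done, the rest is a direct invocation of Hermite-Biehler. It is worth remarking that the argument never appeals to $p$ being in $\LP$, in accord with Example \ref{conversefalse} in which $p(x)=x^2+x+1\notin\LP$ and yet $\{p(k)/k!\}$ turns out to be a multiplier sequence.
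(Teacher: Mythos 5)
Your argument is correct in substance, but it takes a genuinely different route from the paper's. The paper's proof is explicitly an outline in two steps: it first observes (Example \ref{thirtytwo}) that for the normalized quadratic $1+x+cx^2$ one has $\sum_{k}p(k)x^k/k!=e^x(1+x)(1+cx)\in\LP^+$, so that $\{p(k)\}_{k=0}^\infty$ is itself a multiplier sequence and $\{p(k)/k!\}_{k=0}^\infty$ is a Hadamard product of two multiplier sequences; it then reduces the general case $a,b>0$ to this normalized one via the substitution $\tfrac1b p(bx/a)$. You instead work directly at the level of the Bessel identity $F(x)=\sum_k p(k)x^k/(k!)^2=(b+cx)\Phi(x)+ax\Phi'(x)$ with $\Phi=I_0(2\sqrt{x})$, and apply Hermite--Biehler twice. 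Your route buys something concrete: when $(a+c)^2<4bc$ the polynomial $b+(a+c)x+cx^2$ has non-real zeros, so $\{p(k)\}$ is \emph{not} a multiplier sequence and the Hadamard-product shortcut is unavailable; the paper's rescaling $p(x)\mapsto\tfrac1b p(bx/a)$ changes the sampled values $p(k)$, and the outline does not explain why the multiplier-sequence property survives that change, whereas your decomposition treats all $a,b,c\ge0$ uniformly. Two of your steps deserve tightening, and both are standard. The membership $\psi=c\Phi+a\Phi'\in\LP$ needs no interlacing of $j_{0,n}$ and $j_{1,n}$ at all: $c\Phi+a\Phi'=a\,e^{-cx/a}\frac{d}{dx}\bigl(e^{cx/a}\Phi\bigr)$, and $\LP$ is closed under multiplication by $e^{\sigma x}$ and differentiation. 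For the interlacing of the $w_n$ with the $z_n$, your sign-alternation count only yields an \emph{odd} number of zeros of $\psi$ in each gap $(z_{n+1},z_n)$; the clean way to pin down exactly one is the genus-zero expansion $\Phi'/\Phi=\sum_n(x-z_n)^{-1}$, which is strictly decreasing from $+\infty$ to $-\infty$ on each gap and remains positive on $(z_1,\infty)$, so $c+a\Phi'/\Phi$ vanishes exactly once per gap and never to the right of $z_1$. Finally, the Hermite--Biehler step for the entire functions $\Phi$ and $x\psi$ (the cited source \cite{rs} states it for polynomials) follows by applying the polynomial statement to the partial products and passing to the locally uniform limit via Hurwitz's theorem. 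With those substitutions your proof is complete.
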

\begin{proof} (An outline.) If $p(x)$ has only real zeros, the conclusion is clear, since $\{\frac{p(k)}{k!}\}_{k=0}^\infty$ is the Hadamard product of two multiplier sequences.  If $a\not= 0$ (so that $p(x):=cx^2+a x +b$, where we may assume that $b\not= 0$), then it suffices to consider $\frac{1}{b}p(bx/a)=1+x+\frac{bc}{a^2}x^2$. Hence we infer, from the argument used in Example \ref{thirtytwo}, that  $\{\frac{p(k)}{k!}\}_{k=0}^\infty$ is a multiplier sequence.
\end{proof}
The foregoing simple, but instructive, examples were introduced in order  to motivate the following general problem.
\begin{problem}\label{thirtyfour}   Let $p(x):=\sum_{k=0}^na_k x^k$, where $a_k \ge 0$ ($k\ge 0$).  Find conditions on $p(x)$ such that (a) $\{\frac{p(k)}{k!}\}_{k=0}^\infty$ is a multiplier sequence and (b) $\{\frac{p(k)}{k!}\}_{k=0}^\infty$ is a CZDS (cf. Definition \ref{CZDS}).
 \end{problem}
 Part (b) of Problem \ref{thirtyfour} appears to be particularly challenging. If $p(x)\in\LP^+\cap\mathbb{R}[x]$, then it follows from a theorem of Laguerre (see, for example, \cite[Theorem 4.1 (3)]{ccczds}), that the sequence $\{\frac{p(k)}{k!}\}_{k=0}^{\infty}$ is a CZDS. We also call attention to one  of the principal results of \cite[Theorem 2.13]{ccczdsb} which completely characterizes the {\it class of all polynomials} which interpolate CZDS. Notwithstanding, these results, at this juncture, we are obliged to expose our ignorance and formulate the following tantalizing open problem.
 \begin{problem} Is the sequence  $\{\frac{1+k+k^2}{k!}\}_{k=0}^\infty$ a CZDS?
 \end{problem}
  We next consider more complicated sequences involving the square root  function (see Section \ref{rootkoverkfact}).
 \begin{example} Let $\displaystyle{\alpha_k:=\frac{e^{-\sqrt{k}}}{k!}}$ and $\displaystyle{\beta_k:=\frac{e^{\sqrt{k}}}{k!}}$   for $k=0,1,2 \dots .$ Then the sequence $\alpha:=\{\alpha_k\}_{k=0}^\infty$  is not a multiplier sequence since the Jensen polynomial 
   \[
  g_3(x):= g_3(x;\alpha):=\sum_{k=0}^3 \binom{3}{k}\alpha_k x^k=1+\frac{3}{e}x+\frac{3}{2} e^{-\sqrt{2}} x^2+\frac{1}{6} e^{-\sqrt{3}} x^3
   \]
   has two non-real zeros (see the discussion after Definition \ref{jensenpoly}). In particular, $\psi(x):=\sum_{k=0}^{\infty}\frac{\alpha_k}{k!}x^k=\sum _{k=0}^{\infty } \frac{e^{-\sqrt{k}} x^k}{(k!)^2}\notin \LP^+$. On the other hand, our numerical work shows that the Jensen polynomials of degree $n$ ($1\le n\le 30$), associated with the sequence $\{\beta_k\}_{k=0}^\infty$, have only real zeros.
 \end{example}
   \begin{problem} \label{thirtysix} Determine whether the sequence  $\{\frac{\sqrt{k}}{k!}\}$ is (a) a multiplier sequence  (b) a CZDS (cf. Definition \ref{CZDS}).
\end{problem}
We remark that if the sequence  $\seq{\frac{\sqrt{k}}{k!}}$ is  a multiplier sequence (or a CZDS), then the sequence  $\{\frac{\sqrt{k}}{(2k)!}\}_{k=0}^{\infty}$ is  also a multiplier sequence (or a CZDS). Indeed, it follows from the Legendre Duplication Formula \cite[p.\,71]{andrews} that $\ds{\frac{\sqrt{\pi}}{4^k\Gamma(k+1/2)}=\frac{k!}{(2k)!}}$. Now by Laguerre's theorem (\cite[Theorem 4.1 (3)]{ccczds}),  the sequence $\ds{\{\frac{1}{\Gamma(k+1/2)}\}_{k=0}^{\infty}}$ is a CZDS and whence the above claim follows.\\
  In light of the discussion in Example \ref{thirtytwo} and the fact that the sequence $\ds{\{\frac{1}{\Gamma(k+1)}\}_{k=0}^{\infty}}$ is a CZDS, we expect an affirmative answer to the following question (see also Theorem 1).
  \begin{problem} Is it true that for every $s \in \mathbb{R}^+$, there exists an $m \in \mathbb{N}$, such that $\displaystyle{\sum_{k=0}^{\infty} \frac{k^s}{(k!)^m}x^k \in\LP^+}$.
    \end{problem}
Before stating our next  problem, we pause for a moment  and briefly touch upon the characterization of entire functions in $\varphi(x)
\in \LP^+$  in terms of their Taylor coefficients. To this end, we
consider
the entire function
\begin{equation} \label{genentire}
\varphi(x):=\sum_{k=0}^{\infty}\alpha_kx^k,\quad \text{where}\quad
\alpha_k=\frac{\gamma_k}{k!}, \quad \gamma_0=1,\quad \gamma_k\ge0\quad
(k=1,2,3\dots).
\end{equation}
and recall the following definition.
\begin{definition} A real sequence $\{\alpha_k\}_{k=0}^{\infty}$,
 $\alpha_0=1$, is said to be a {\it totally positive sequence}, if the infinite
lower triangular matrix
\[
A=(\alpha_{i-j})=\left(\begin{array}{cccccc} \alpha_0 & 0 & 0 & 0 & 0 & \cdots\\
\alpha_1&\alpha_0&0&0&0&\cdots\\
\alpha_2&\alpha_1&\alpha_0&0&0&\cdots\\
\alpha_3&\alpha_2&\alpha_1&\alpha_0&0&\cdots\\
\vdots &\vdots &\vdots & \vdots &\vdots & \ddots \end{array} \right) \qquad (i,j=1,2,3,\dots), 
\]
is totally positive; that is, all the minors of $A$ of all orders are
non-negative.
\end{definition}
In \cite{AESW}, M. Aissen, A. Edrei, I. J. Schoenberg and A. Whitney
characterized the generating functions of totally positive sequences. A special
case of their result is the following theorem.
 \begin{theorem}(\cite[p.\,306]{AESW})\label{thirtynine} Let $\varphi(x)$ be the 
entire
function defined by $(\ref{genentire})$. Then $\{\alpha_k\}_{k=0}^{\infty}$ is a totally
positive sequence if and only if $\varphi(x) \in \LP^+$.
\end{theorem}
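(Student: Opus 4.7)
The plan is to translate the biconditional into a statement about the lower triangular Toeplitz matrix $A=(\alpha_{i-j})$ and its minors, and then to exploit the multiplicative structure of Toeplitz matrices: if $f(x)=\sum f_k x^k$ and $g(x)=\sum g_k x^k$, then the Toeplitz matrix $T(fg)$ of the Cauchy product equals the matrix product $T(f)\,T(g)$. Combined with the Cauchy--Binet formula, this shows that any minor of $T(fg)$ is a sum of products of corresponding minors of $T(f)$ and $T(g)$. Consequently the class of entire functions whose Toeplitz matrices are totally positive is closed under products, and under locally uniform limits by a standard normal-families argument.

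For the implication $\varphi\in\LP^+\Rightarrow\{\alpha_k\}$ totally positive, I would use the canonical factorization
\[
\varphi(x)=c\,e^{\sigma x}\prod_{k=1}^{\omega}\left(1+\frac{x}{x_k}\right),\qquad c>0,\ \sigma\ge0,\ x_k>0,
\]
and handle each elementary factor separately. For a linear factor $1+\alpha x$ with $\alpha\ge0$, the Toeplitz matrix is lower bidiagonal with non-negative entries, and every nonzero minor reduces to a product of entries along a shifted diagonal, hence is non-negative. For $e^{\sigma x}$ the minors of $\bigl(\sigma^{i-j}/(i-j)!\bigr)$ can be shown non-negative either by induction on the minor size or by recognizing the classical fact that the exponential is a P\'olya frequency function. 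A limiting argument using the multiplicativity observation handles the (possibly infinite) product.

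The converse is substantially harder and constitutes the main obstacle. The $2\times 2$ minor inequalities immediately yield $\alpha_k^2\ge\alpha_{k-1}\alpha_{k+1}$, i.e.\ log-concavity of the Taylor coefficients; this constrains the growth of $\varphi$ sufficiently to force its order to be at most $1$, so $\varphi$ admits a Hadamard factorization of the form $c\,x^m e^{\sigma x}\prod(1-x/z_k)$ with $\sum 1/|z_k|<\infty$. One must then show that all $z_k$ are real and negative and that $\sigma\ge0$. The classical route is to reinterpret total positivity of $T(\varphi)$ as the variation-diminishing property of the discrete convolution operator $a\mapsto a*\alpha$, and then use a Descartes-type argument to exclude non-real zeros: if $\varphi$ possessed a conjugate pair of non-real zeros one could exhibit a finite real sequence whose convolution with $\alpha$ strictly increases the number of sign changes, contradicting variation-diminution.

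The principal technical difficulty lies precisely in this last step: rigorously deducing reality and negativity of all zeros of $\varphi$ from positivity of higher-order minors requires the full Schoenberg--Edrei machinery of P\'olya frequency functions, combined with careful growth estimates to rule out pathological exponential factors. For the purposes of the present paper one therefore invokes Theorem~\ref{thirtynine} in the form established by Aissen, Edrei, Schoenberg and Whitney rather than reconstructing its proof.
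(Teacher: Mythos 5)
The paper does not actually prove this theorem: it is quoted, with attribution, from Aissen, Edrei, Schoenberg and Whitney, so there is no internal argument to compare yours against, and your closing decision to invoke the cited result is exactly what the authors do. Your sketch of the direction $\varphi\in\LP^+\Rightarrow\{\alpha_k\}_{k=0}^{\infty}$ totally positive is the standard argument and is sound: lower triangular Toeplitz matrices multiply as the corresponding power series do, Cauchy--Binet shows that total positivity is preserved under such products and under coefficientwise limits, and the elementary factors $1+\alpha x$ ($\alpha\ge 0$) and $e^{\sigma x}$ ($\sigma\ge 0$) are handled directly.

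One concrete caveat about your converse sketch: the claim that the $2\times 2$ minor inequalities (log-concavity, $\alpha_k^2\ge\alpha_{k-1}\alpha_{k+1}$) already force the order of $\varphi$ to be at most $1$ is not justified. Log-concavity only says the ratios $\alpha_{k+1}/\alpha_k$ are nonincreasing, hence $\alpha_k\le\alpha_1^k$; this is compatible with $\alpha_k$ decaying arbitrarily slowly subject to entirety and yields no bound on the order by itself. The genuine content of the converse --- that non-negativity of all minors forces the factorization $e^{\sigma x}\prod_k(1+x/x_k)$ --- is Edrei's theorem, established with Nevanlinna-theoretic growth estimates, and the variation-diminishing outline you give does not substitute for it. Since you acknowledge this and fall back on the citation, as the paper itself does, your proposal is acceptable as a gloss on the quoted theorem, but it should not be read as a proof of the hard direction.
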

Preliminaries aside, we are now in position to state and analyze the next open problem.
\begin{problem}\label{fourty} Let $\ds{\gamma_k:= \frac{1}{(k+1)^{k+1}}}$, $k\in \mathbb{N}_0$. Determine whether the sequence $\seq{\frac{\gamma_k}{k!}}$ is (a) a multiplier sequence (b) a CZDS.
\end{problem}
We first claim that the entire function $\psi(x):=\sum_{k=0}^\infty \gamma_k x^k\notin \LP^+$. In order to verify that $\psi(x)$ is not in the Laguerre-P\'olya class, we invoke Theorem \ref{thirtynine} and show that  $\{\gamma_k\}_{k=0}^{\infty}$ is \underbar{not} a totally
positive sequence.  Indeed, after some experimentation, we find that the determinant of the $4\times 4$ submatrix 
\[
A=\left( \begin{array}{cccc} \frac{1}{4} & 1 & 0 & 0 \\ \frac{1}{27} &  \frac{1}{4} & 1& 0 \\ \frac{1}{256} & \frac{1}{27}& \frac{1}{4}& 1 \\ \frac{1}{3125} & \frac{1}{256} & \frac{1}{27} & \frac{1}{4} \end{array} \right)
\]
  is negative: $\det(A)=-(38873/1166400000)$.   We remark that {\bf if} $\seq{\frac{\gamma_k}{k!}}$ is  a multiplier sequence, then $\varphi(x):=\sum_{k=0}^{\infty} \frac{\gamma_k}{k!}x^k\in\LP^+$ and therefore, by Theorem \ref{thirtynine},  the sequence $\seq{\frac{\gamma_k}{k!}}=\seq{\frac{1}{(k+1)^{k+1}}\frac{1}{k!}}$ is a totally positive sequence. In addition, we also observe that by Stirling's formula (\cite[p.\,98]{andrews})
  \[
  \frac{1}{(k+1)^{k+1}}\sim \frac{1}{(k+1)!}\, \sqrt{2\pi}\, e^{-(k+1)}\, \sqrt{k+1} \qquad (k\gg1).
\]
Thus noting again the vexing presence of the square root function, it may be instructive to compare Problem \ref{fourty} with Problem \ref{thirtysix}.
\medskip
We conclude this paper  with one more open problem that (i) may be useful in the study of CZDS and (ii) is related to our results in Section \ref{transformations}.

\begin{problem} Characterize all multiplier sequences which have a $C_k$-representation (cf. Definition \ref{c_krep}).
\end{problem}


\begin{thebibliography}{99}

\bibitem{AESW} \textit{M.~Aissen, A.~Edrei, I.~J.~Schoenberg} and \textit{A.~Whitney}, On the generating function of totally positive sequences, Proc. Nat. Acad. Sci. U.S.A. {\bf 37} (1951), pp. 303-307.
 
\bibitem{andrews} \textit{L.~C.~Andrews}, Special Functions of Mathematics for Engineers, McGraw Hill Inc, 2nd ed., 1992.

\bibitem{baricz} \textit{\'A.~Baricz}, Generalized Bessel functions of the first kind, Lecture Notes in Mathematics \#1994, Springer-Verlag, Berlin, 2010.

\bibitem{BC} \textit{D.~Bleecker} and \textit{G.~Csordas}, Hermite expansions and the distribution of zeros of entire functions, Acta Sci. Math. (Szeged), {\bf 67} (2001), 177-196.

\bibitem{bo}\textit{P.~Br\"and\'en} and \textit{E.~Ottergren}, A characterization of multiplier sequences for generalized Laguerre bases, Constr. Approx., {\bf 39} (2014), 585-596.

\bibitem{chasse} \textit{M.~Chasse}, Linear preservers and entire functions with restricted zero loci, Ph.D. dissertation, University of Hawai\textquoteleft i, 2011.

\bibitem{ccczds} \textit{T.~Craven} and \textit{G.~Csordas}, Composition theorems, multiplier sequences and complex zero decreasing sequences, in Value Distribution Theory and Related Topics, Advances in Complex Analysis and Its Applications, Vol. 3, eds. G.~Barsegian, I.~Laine and C.~C.~Yang, Kluwer Press, 2004.

\bibitem{ccczdsb} \textit{T.~Craven} and \textit{G.~Csordas}, Complex zero decreasing sequences, Methods Appl. Anal. {\bf 2} (1995), pp. 420-441.

\bibitem{emot} \textit{A.~Erd\'elyi, W.~Magnus., F.~Oberhettinger} and \textit{F.~G.~Tricomi}, Higher Transcendental Functions, vols. I-II., McGraw-Hill Book Co., Inc., New-York, 1953. 

\bibitem{tom_legendre} \textit{T.~Forg\'acs, J.~Haley, R.~Menke}, and \textit{C.~Simon}, The non-existence of cubic Legendre multiplier sequences, Involve, a  Journal of Mathematics, {\bf 7-6} (2014), 773--786. DOI 10.2140/involve.2014.7.773

\bibitem{tom_hermite} \textit{T.~Forg\'acs} and \textit{A.~Piotrowski,} Hermite multiplier sequences and their associated operators, to appear in Constr. Approx., DOI 10.1007/s00365-015-9277-3

\bibitem{tom_laguerre} \textit{T.~Forg\'acs} and \textit{A.~Piotrowski,} Multiplier sequences for generalized Laguerre bases, Rocky Mountain J. of Math., {\bf 43} (4), 1141-1159, 2013.

\bibitem{hardy_paper} \textit{G.~H.~Hardy}, On the zeros of certain class of integral Taylor series II, Proc. London Math. Soc.(2) {\bf 2} (1905), 401-431.

\bibitem{moll}  \textit{V.~H.~Moll}	Numbers and Functions: From a classical-experimental mathematician's point of view, AMS STML series \#65, 2012, 504 pp., Softcover, ISBN-10: 0-8218-8795-5

\bibitem{lagarias} \textit{J.~Lagarias,} An elementary problem equivalent to the Riemann hypothesis, Amer. Math. Monthly, 109 (2002) no.6, 534-543.

\bibitem{levin} {B. Ja. Levin}, {\emph{Distribution of Zeros of Entire Functions}}, Transl. Math. Mono. Vol. 5, Amer. Math. Soc., Providence, RI, 1964; revised ed. 1980.

\bibitem{obreschkoff} \textit{N.~Obreschkoff}, Verteilung und Berechnung der Nullstellen reeller Polynome, VEB Deutscher Verlag der Wissenschaften, Berlin, 1963.

\bibitem{ostrovskii}\textit{I.~V.~Ostrovskii}, Hardy's generalization of $e^x$ and related analogs of cosine and sine, Comput. Methods Funct. Theory {\bf 6} (2006), no. 1, 1-14.

\bibitem{PS} \textit{G.~P\'olya} and \textit{J.~Schur}, \"Uber zwei Arten von Faktorenfolgen in der Theorie der algebraischen Gleichungen, J. Reine Angew. Math., {\bf 144} (1914), 89-113. 

\bibitem{rs} \textit{Q.~I.~Rahman} and \textit{G.~Schmeisser}, Analytic Theory of Polynomials, London Mathematical Society Monographs, New Series 26, Oxford University Press, 2002. 

\bibitem{rainville} \textit{E.~D.~Rainville}, Special Functions, The Macmillan Company, New York, 1960.

\bibitem{whittaker} \textit{E.~T.~Whittaker and G.N.~Watson}, A Course in Modern Analysis, Cambridge University Press, Cambridge, 1962.

\bibitem{yoshi} \textit{R.~Yoshida}, Linear and non-linear operators, and the distribution of zeros of entire functions, PhD thesis. University of Hawai`i at Manoa, May 2013. 
\end{thebibliography}
\end{document}